\newcounter{notes}%[page]   %Le 2eme argument fait reinitialiser les numeros de notes a chaque page
\newcommand{\ignore}[1]{}
\definecolor{maxime}{rgb}{0,.6,0}
\definecolor{fred}{rgb}{0,0,0.8}
\newtheorem{theorem}{Theorem}
\newtheorem{proposition}[theorem]{Proposition}
\newtheorem{corollary}[theorem]{Corollary}
\newtheorem{lemma}[theorem]{Lemma}
\newtheorem{observation}[theorem]{Observation}
\theoremstyle{definition}
\newtheorem{remark}[theorem]{Remark}
\newtheoremstyle{theoremwithref}{}{}{\itshape}{}{\bfseries}{.}{.5em}{#1 #2 #3}
\theoremstyle{theoremwithref}
\newcommand{\R}{\mathbb{R}}
\newcommand{\Z}{\mathbb{Z}}
\newcommand{\T}{\mathbb{T}}
\newcommand{\SL}{\mathrm{SL}}
\newcommand{\Hom}{\mathrm{Hom}}
\newcommand{\Diff}{\mathrm{Diff}}
\newcommand{\rot}{\mathrm{rot}}
\newcommand{\dq}{/\!/}
\DeclareMathOperator{\Homeo}{Homeo}
\newcommand{\Cfin}[1]{\mathcal{C}_{#1}^\dagger}
\newcommand{\cA}{\mathcal{A}}
\title{A note on weak conjugacy for homeomorphisms of surfaces}
\author[F. Le Roux]{Fr\'ed\'eric Le Roux}
\address{Sorbonne Universit\'es, UPMC Univ.\ Paris 06,
Institut de Math\'ematiques de Jussieu-Paris Rive Gauche,
UMR 7586, CNRS, Univ. Paris Diderot, Sorbonne
Paris Cit\'e, 75005 Paris, France}
\email{frederic.le-roux@imj-prg.fr}
\author[A. Passeggi]{Alejandro R. Passeggi}
\address{CMAT, Facultad de Ciencias, Universidad de la Rep\'ublica, Uruguay}
\email{alepasseggi@gmail.com}
\author[M. Sambarino]{Martin P. Sambarino}
\address{CMAT, Facultad de Ciencias, Universidad de la Rep\'ublica, Uruguay}
\email{samba@cmat.edu.uy}
\author[M. Wolff]{Maxime Wolff}
\address{Institut de Math\'ematiques de Toulouse, UMR5219,
UPS, F-31062 Toulouse Cedex~9, France}
\email{maxime.wolff@math.univ-toulouse.fr}
\begin{document}

\numberwithin{theorem}{section}
%\numberwithin{equation}{section}

\begin{abstract}
  We explore the relation of weak conjugacy in the group of
  homeomorphisms isotopic to the identity, for surfaces.
\vspace{0.2cm}
%MSC Classification: %37E30, 57K20, 37E45, 37D15.

\end{abstract}

\maketitle

\sloppy
%%%%%%%%%%%%%%%%%%%%%%%%%%%%%%%%%%%%%%%%%%%%%%%%%%%

\section{Introduction}\label{sec:IntroWC}

\subsection{Weak conjugacy}\label{ssec:WC}
Given any topological group $G$, we say that $f,g\in G$ are {\em weakly
conjugate} if, for every continuous conjugacy invariant $i\colon G\to Y$,
taking values in any Hausdorff space $Y$, we have $i(f)=i(g)$.

In this article, we investigate the relation of weak conjugacy in the
group $\Homeo_0(\Sigma)$ of homeomorphisms isotopic to the identity of
a compact surface $\Sigma$, endowed with the topology of uniform convergence.
In other words, we investigate the problem of determining a complete set of
{\em continuous} conjugacy invariants on the group $\Homeo_0(\Sigma)$,
with a particular interest to the case when $\Sigma$ is the torus $\T^2$.

This seems a natural question for itself, and our motivation also originates from
the analogy with character varieties, as we will explain now.

When $G$ is the group $\Homeo_+(S^1)$ of orientation-preserving homeomorphisms
of the circle, every map $f\in G$
has a Poincar\'e rotation number, $\rot(f)\in\R/\Z$, which is rational if and only
if $f$ has periodic orbits. In any case, by shrinking some intervals, the
rotation of angle $\rot(f)$ appears as a limit of conjugates of $f$, and it follows
that the rotation number is a complete weak conjugacy invariant.
This was generalized to actions of arbitrary discrete groups on the circle by
orientation-preserving homeomorphisms, by Ghys' {\em bounded Euler class}~\cite{Ghys87}.
Building upon work of Ghys~\cite{Ghys87}
and Matsumoto~\cite{Matsumoto86}, Mann and the last author observed that
two group actions on the circle are weakly conjugate if and only if they have
the same Ghys' bounded Euler class (see~\cite[Proposition~2.12]{RigidGeom}),
and formalized the notion of character space $X(\Gamma,G)$ for any discrete
group $\Gamma$ and topological group $G$, as the quotient of
$\Hom(\Gamma,G)$ by weak conjugacy. In this setting,
maps with values in the space of weak conjugacy classes of the group $G$ will
play the role of trace functions in classical character varieties,
and that makes it worthwhile to understand this space for different groups~$G$.

When $G=\Homeo_0(\T^2)$, Misiurewicz and Ziemian~\cite{MZ1} proved that, if
$f\in G$ and if $\tilde{f}\in\Homeo(\R^2)$ is a lift of $f$,
the sequence $\left(\frac{1}{n}\tilde{f}^n([0,1]^2)\right)_{n\geqslant 0}$ converges
to a nonempty, convex, compact subset $\rho(\tilde{f})$ of $\R^2$ which, up to
integer translations, depends only on $f$. This {\em rotation set} captures
information of the rotational dynamics of these maps, and can be used to recover
much information about the maps, for instance about periodic orbits
(see Franks~\cite{Franks}) or topological entropy (see Llibre-McKay~\cite{LM}).

By work of Misiurewicz-Ziemian, the interior of the rotation set $\rho(f)$ varies
continuously with the map $f\in\Homeo_0(\T^2)$. The first problem we
tried to address was whether this was a full invariant of weak conjugacy. We answer
this question by the negative; see Corollary~\ref{cor:RotNotEnough} below.

\subsection{The global nature of weak conjugacy}\label{ssec:GlobalNature}
We may easily produce maps of surfaces with rich dynamics but which are weakly
conjugate to the identity.
\begin{remark}\label{rmk:Alexander}
  Let $\Sigma$ be any surface, $D\subset\Sigma$ be a closed disk embedded in
  the interior of $\Sigma$, and let $f$ be any map supported in $D$.
  Then $f$ is weakly conjugate to the identity map of~$\Sigma$.
\end{remark}
Indeed, we may consider a disk $D'$ containing $D$ in its interior,
and then proceed with an Alexander trick in $D'$, to conjugate such maps to maps
with arbitrarily small support, hence arbitrarily uniformly close to the identity.

It should be noted here that the remark above would fail in higher regularity,
or in the context of conservative maps.
For example, the topological entropy is a continuous invariant in
the $C^\infty$ topology (see~\cite{Newhouse}).
For conservative maps, barcodes provide a continuous invariant in $C^0$
topology, see~\cite{LSV} and especially section 1.1 for a discussion of weak
conjugacy in that context.
A related discussion in different regularities in dimension~1 appears
in~\cite{Eynard-Bontemps-Navas}.

On the other hand, if $G$ is a subgroup of $\Homeo_0(\Sigma)$ endowed with a
finer topology, e.g., if $G=\Diff_0^r(\Sigma)$ for some $r\geqslant 1$,
then the weak conjugacy class in $\Homeo_0(\Sigma)$ of any element $g\in G$
obviously defines a continuous conjugacy invariant on $G$.

If we consider the space $\Homeo_0(S^2)$ of homeomorphisms of the sphere $S^2$
in the homotopy class of the identity, the elements supported in a closed disk
form a dense set.
This can be seen by first approximating an arbitrary element $f$ by a diffeomorphism.
This diffeomorphism then has a fixed point, and can be perturbed to new homeomorphism
$h$ that equals the identity on a small disk, and hence, is supported in a closed
disk. Remark~\ref{rmk:Alexander} then applies, and proves the following.

\begin{observation}\label{obs:SphereTrivial}
  All maps in $\Homeo_0(S^2)$ are weakly conjugate.
\end{observation}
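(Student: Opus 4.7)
The plan is to upgrade the sketch given just above the statement: rather than merely noting that maps supported in a disk are dense, I will show that the weak conjugacy class of the identity is \emph{closed} in $\Homeo_0(S^2)$ and \emph{contains a dense set}. Closed plus dense forces the class of $\id$ to be the whole group, and since weak conjugacy is an equivalence relation this gives the observation immediately.

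The first step is a general topological remark: in any topological group $G$, the weak conjugacy class of an element $g$ equals $\bigcap_i i^{-1}(\{i(g)\})$, where $i$ ranges over all continuous conjugacy invariants $i\colon G\to Y$ with $Y$ Hausdorff. Each singleton $\{i(g)\}$ is closed in $Y$, each $i$ is continuous, and the intersection of closed sets is closed. This requires no surface-specific input and will be used for $g=\id$.

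The second step realizes the density, fleshing out the excerpt's outline. Given $f\in\Homeo_0(S^2)$ and $\varepsilon>0$, first approximate $f$ uniformly by a diffeomorphism $g\in\Diff_0(S^2)$ (standard $C^0$ density of diffeomorphisms in homeomorphisms of surfaces). Because $\chi(S^2)=2\neq 0$, the Lefschetz fixed point theorem supplies a fixed point $p$ of $g$. I now modify $g$ near $p$: choose nested disks $D\subset D'$ centered at $p$ and construct a homeomorphism $h$ by an Alexander-type radial interpolation in $D'$, equal to $\id$ on $D$, equal to $g$ outside $D'$, and interpolating between them on $D'\setminus D$. By continuity of $g$ at $p$, choosing $D'$ small enough makes $g|_{D'}$ uniformly close to $\id$, so $h$ is uniformly $\varepsilon$-close to $g$ and hence to $f$. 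The resulting $h$ is supported in the closed disk $S^2\setminus D$, so Remark~\ref{rmk:Alexander} applies and $h$ is weakly conjugate to $\id$.

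The main obstacle I expect is precisely the interpolation step: producing a genuine homeomorphism of $S^2$ that coincides with $\id$ on $D$, matches $g$ on $\partial D'$, and stays within $\varepsilon$ of $g$ in the $C^0$ metric. The boundary matching on $\partial D'$ is where care is needed, but it is handled by a standard radial Alexander construction on the annulus $D'\setminus D$, using only continuity of $g$ at the fixed point $p$ to control the uniform distance. Once this step is in place, combining it with the closedness observation of the first paragraph finishes the argument, since the weak conjugacy class of $\id$ is then a closed set containing a dense set, hence is all of $\Homeo_0(S^2)$.
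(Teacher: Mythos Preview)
Your proposal is correct and follows essentially the same approach as the paper: density of disk-supported maps via diffeomorphism approximation, a fixed point, and a local perturbation, combined with Remark~\ref{rmk:Alexander}. You have simply made explicit the closedness of weak conjugacy classes (which the paper states only later, in Section~2.1, noting it was ``already used implicitly in the introduction'') and fleshed out the interpolation step that the paper leaves as a one-line sketch.
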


Thus for the sphere, every continuous conjugacy invariant
in $\Homeo_0(\Sigma)$ is trivial. This suggests that for surfaces of higher genus,
continuous conjugacy invariants should involve global aspects of the surface,
like rotational properties.

\subsection{Time one of flows}\label{ssec:Flows}
The first main result of this article is that even in higher genus,
the time one maps of flows of vector fields cannot be distinguished by continuous
conjugacy invariants.
\begin{theorem}\label{thm:t1flow}
  Let $\Sigma$ be a compact surface and let $f\colon\Sigma\to\Sigma$
  be the time~1 of the flow of any $C^1$-vector field on $\Sigma$.
  Then $f$ is weakly conjugate to the identity.
\end{theorem}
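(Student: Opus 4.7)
The plan is to prove $i(\phi_1) = i(\id)$ for every continuous conjugacy invariant $i\colon \Homeo_0(\Sigma) \to Y$ with $Y$ Hausdorff. The two main ingredients are the Alexander trick of Remark~\ref{rmk:Alexander} and a fragmentation argument applied along the flow $\{\phi_t\}$.

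For $t > 0$ sufficiently small, the map $\phi_t$ is uniformly $C^0$-close to the identity, because $X$ is $C^1$ and $\Sigma$ is compact. A standard fragmentation lemma for $\Homeo_0(\Sigma)$ (of Edwards--Kirby type) then provides a decomposition $\phi_t = g_1 g_2 \cdots g_k$ with each $g_j$ supported in a small closed disk $D_j \subset \Sigma$. Iteratively applying Remark~\ref{rmk:Alexander} fragment by fragment, I would produce a single homeomorphism $h_\epsilon$ with $h_\epsilon \phi_t h_\epsilon^{-1}$ uniformly $\epsilon$-close to the identity; by continuity of $i$ and conjugation-invariance one has $i(\phi_t) = i(h_\epsilon \phi_t h_\epsilon^{-1}) \to i(\id)$, so $i(\phi_t) = i(\id)$ for all small $t > 0$. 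To conclude for $t = 1$, I would write $\phi_1 = \phi_{1/n}^n$ for large $n$ and apply the same fragmentation-plus-Alexander construction to the product of $n$ copies of a fragmentation of $\phi_{1/n}$, producing $h_\epsilon \phi_1 h_\epsilon^{-1}$ controlled in every continuous invariant. An alternative route is to show that $t \mapsto i(\phi_t)$ is locally constant on $[0,1]$: the relation $\phi_{t_0+s} = \phi_s\phi_{t_0}$ combined with the fact that $\phi_{t_0}$ commutes with the whole flow allows propagation of the local-at-zero statement, and connectedness of $[0,1]$ together with Hausdorffness of $Y$ then delivers $i(\phi_1) = i(\phi_0) = i(\id)$.

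The main obstacle will be the case $\Sigma = \T^2$ with a nowhere-vanishing $X$: then $\phi_1$ may be conjugate to an irrational translation $T_v$ with $v \notin \Z^2$, and $\phi_1$ is \emph{not} in the $C^0$-closure of the conjugacy class of $\id$ --- the Misiurewicz--Ziemian rotation vector is a non-trivial conjugation invariant obstructing such a $C^0$-approximation. However, rotation-set-based obstructions are only upper semi-continuous and do not descend to continuous Hausdorff-valued invariants. The fragmentation-plus-Alexander construction therefore need not achieve actual $C^0$-convergence to $\id$; what is required is only that, for every genuinely continuous Hausdorff-valued invariant $i$, the conjugates satisfy $i(h_\epsilon \phi_1 h_\epsilon^{-1}) \to i(\id)$. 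Implementing the fragmentation on $\T^2$ compatibly with the rotational structure of the flow --- so that the Alexander-type shrinking succeeds in the weaker topology generated by continuous Hausdorff-valued invariants, even though it cannot succeed uniformly --- will be the technically most delicate part of the argument, and is where the hypothesis that $f$ arises as the time one of a flow (rather than an arbitrary element with prescribed rotation vector) will be essential.
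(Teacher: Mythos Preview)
Your proposal has a genuine gap at its core. The fragmentation step writes $\phi_t = g_1\cdots g_k$ with each $g_j$ disk-supported, and Remark~\ref{rmk:Alexander} shows each $g_j$ individually is weakly conjugate to $\id$. But the weak conjugacy class of the identity is \emph{not} closed under products: concretely, the maps $v$ and $h$ of Paragraph~\ref{ssec:LenVSRot} are each the time one of a flow (hence, by the very theorem under discussion, weakly conjugate to $\id$), yet $|v\circ h|>0$, so $v\circ h$ is not weakly conjugate to $\id$. Thus the step ``iteratively apply Remark~\ref{rmk:Alexander} fragment by fragment to produce a single $h_\epsilon$ with $h_\epsilon\phi_t h_\epsilon^{-1}$ uniformly $\epsilon$-close to $\id$'' cannot succeed in general. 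You correctly note that no such $h_\epsilon$ exists for a small translation of $\T^2$ (the rotation vector obstructs it), but your fallback --- ask only that $i(h_\epsilon\phi_1 h_\epsilon^{-1})\to i(\id)$ for every continuous $i$ --- is circular: the only leverage on an arbitrary continuous invariant $i$ is via $C^0$-convergence of its argument, which is exactly what fails. The alternative ``locally constant'' route has the same defect: from $[\phi_s]=[\id]$ and $\phi_{t_0+s}=\phi_s\phi_{t_0}$, even with $\phi_{t_0}$ commuting with $\phi_s$, one cannot conclude $[\phi_{t_0+s}]=[\phi_{t_0}]$; left-multiplication by an element of the weak conjugacy class of $\id$ does not preserve weak conjugacy classes, and there is no reason the conjugating maps that shrink $\phi_s$ should lie in the centralizer of $\phi_{t_0}$.

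The paper's proof takes a completely different route, exploiting the flow hypothesis structurally rather than through fragmentation. By Peixoto's $C^1$-density theorem it suffices to treat Morse--Smale $X$. The key device is a \emph{slowdown function}: along any well-separating family of Conley circles one can conjugate the flow of $X$ to the flow of $sX$ for any positive $s\leqslant 1$ supported near the circles, and by passing to a limit, replace $X$ by a vector field vanishing outside tubular neighborhoods of the periodic orbits (Lemma~\ref{lem:Flow-Step1}). On each remaining annulus the flow belongs to an explicit conjugacy model $\cA^\pm(r)$ (Lemmas~\ref{lem:MSFitsModel} and~\ref{lem:ModeleLocal}); ``fibered rotations'' lie in the closure of each such model (Lemma~\ref{lem:ModeleLocalRot}), and these in turn can be perturbed into $\cA^\pm(\varepsilon)$ for arbitrarily small $\varepsilon$. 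The argument is thus a chain of conjugations and $C^0$-limits connecting $\phi_X^1$ to $\id$, never attempting to push a single conjugacy class all the way to $\id$; this chain structure is precisely what your proposal is missing.
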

When $\Sigma$ is the torus, Theorem~\ref{thm:t1flow} concerns in particular
every rotation. This presents some contrast with the case of the circle, where
rotations of distinct angles are not weakly conjugate.

We may combine Theorem~\ref{thm:t1flow} with the idea of Remark~\ref{rmk:Alexander}
to produce yet other examples of maps weakly conjugate to the identity. For
example, if the time one map $f$ of a flow restricts to a finite order rotation
in some annulus $A$ embedded in $\Sigma$, and $D$ is a disk embedded in $A$ and
disjoint from (or equal to) its images by powers of $f$, and $g$ is any map
supported in the orbit of $D$, then the composition $h=g\circ f$ is weakly conjugate
to the identity, as we may conjugate $h$ by maps commuting with $f$ and which
shrink the orbit of $D$. This implies that $f$ is a limit of conjugates of $h$,
hence $f$ and $h$ are weakly conjugate, and $f$ is itself weakly conjugate to
the identity by Theorem~\ref{thm:t1flow}.
This encompasses the so-called {\em odometer maps} on surfaces.

Besides all these maps and those from Remark~\ref{rmk:Alexander}, the
torus ``pseudo-rotations'' constructed in~\cite{BCL}, which may have positive
entropy, are obtained as limits of conjugates of a given rotation, and thus are
all weakly conjugate to the identity.

\subsection{Asymptotic translation length and width of the rotation set}\label{ssec:LenVSRot}
All the results we have stated so far express that
the weak conjugacy class of the identity,
in $\Homeo_0(\Sigma)$, contains a wide variety of elements.
Now, in the opposite direction, we will discuss some known continuous conjugacy
invariants, and precise some relations among them. The discussion
will mostly restrict to the case of the torus.

Recently, Bowden, Hensel and Webb~\cite{BHW} have introduced the {\em fine graph
of curves} $\Cfin{}(\Sigma)$, on a closed surface of genus $g\geqslant 1$, as follows.
It is the graph, whose vertices are the essential curves embedded in $\Sigma$,
and two vertices are connected by an edge whenever the two curves are disjoint.
When $\Sigma$ is a torus, we also put an edge between two vertices if the two
curves admit one intersection point.
Bowden, Hensel and Webb have proved that this graph is
Gromov-hyperbolic and has infinite diameter; this enabled them to construct
quasi-morphisms on
$\Diff_0(\Sigma)$, thereby answering a long standing question of Burago, Ivanov
and Polterovich. The group $\Homeo_0(\Sigma)$ acts on this Gromov-hyperbolic
graph by isometries, and for any $f\in\Homeo_0(\Sigma)$, the limit
\[ |f| = \lim_{n\to\infty}\frac{d(x,f^n(x))}{n} \]
is well-defined and does not depend on the vertex $x$ of the fine graph.
It is the {\em asymptotic translation length} of the action of $f$ on the
fine graph. Bowden, Hensel, Mann, Militon and Webb~\cite{BHMMW} have proved
that the range of the map $|\cdot|$ equals $\R_{\geqslant 0}$, and that this
map is a continuous, conjugacy invariant map.
They also proved that for any $f\in\Homeo_0(\T^2)$, the asymptotic translation
length $|f|$ vanishes, if and only if the rotation set $\rho(f)$ has empty interior.
They also provided some sufficient conditions for $f$ to act
elliptically or parabolically on $\Cfin{}(\T^2)$; this was completed subsequently
by Guih\'eneuf and Militon~\cite{GM}.

In fact, the method of \cite{BHMMW} is partly quantitative, and here we discuss in more
detail, quantitatively, the relation between $|f|$ and the {\em essential width}
$EW(\rho(f))$.
Given any compact convex set $C$ of the plane, set $EW(C)$ to be the
infimum of the length of the orthogonal projection of $A\cdot C$ to
the $x$-axis, as $A$ ranges over $\SL_2(\Z)$. In other words, it is the
infimum of the horizontal width of $C$, after any change of basis of $\Z^2$.
The proof of~\cite[Lemma 5.1]{BHMMW} yields the following inequality, for all
$f\in\Homeo_0(\T^2)$:
\[ EW(\rho(f)) \geqslant |f|. \]
This is the main part of their proof of the implication
$\ring{\rho(f)}=\emptyset\Rightarrow |f|=0$.
Their proof of the converse implication also
enables to keep track of explicit estimates. Yet, these estimates are non
uniform, and we prove that there exist no uniform reverse estimate for the
inequality above, by providing elementary, explicit examples, as follows.
Let $V\colon\R^2\to\R^2$ be defined by $V(x,y)=(x,y+\phi(x))$, where
$\phi\colon\R\to[0,1]$ is a continuous $1$-periodic function such that $\phi(0)=0$
and $\phi(1/2)=1$; for instance we may choose $\phi(x)=\sin^2(\pi x)$. This
descends to a map $v\colon\T^2\to\T^2$ which rotates at varying speed along the
vertical direction. Similarly, define $h\colon\T^2\to\T^2$ by the formula
$H(x,y)=(x+\phi(y),y)$. The compositions of these maps have simple enough dynamics
to keep track of their iterations and compute explicitly their rotation sets; this
makes such maps a classical example for maps with interesting rotation sets
(see e.g. \cite[Chapter~1]{KwaPHD} for a comparable usage).
Here, for every $n\geqslant 1$ we set $f_n=v^n\circ h^n$, and
$g_n=(v\circ h)^n$.

\begin{theorem}\label{thm:RotVSLength}{\ }
  
  \begin{enumerate}
  \item
    For all $c>0$, there exists $m_c>0$ such that for every $f\in\Homeo_0(\T^2)$,
    if $EW(\rho(f))\leqslant c$ then $|f|\geqslant m_c EW(\rho(f))$.
  \item
    For every $n\geqslant 1$, we have
    $\rho(f_n)=\rho(g_n)=[0,n]^2\textrm{ mod }\Z^2$, but
    $|f_n|\leqslant 2$ while $|g_n| = n |g_1|$ with $|g_1|>0$.
  \end{enumerate}
\end{theorem}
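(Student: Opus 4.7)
My plan is to tackle the two parts of Theorem~\ref{thm:RotVSLength} separately: part~(2) reduces to explicit computations, while part~(1) requires extracting quantitative content from \cite{BHMMW}.

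For part~(2), I would first compute the rotation sets. Taking the natural lift
\[ F_n(x,y) = (x + n\phi(y),\ y + n\phi(x + n\phi(y))), \]
the displacement vector lies entirely in $[0,n]^2$, so $\rho(f_n) \subset [0,n]^2\textrm{ mod }\Z^2$. Using $\phi(0)=0$, $\phi(1/2)=1$ and the $1$-periodicity of $\phi$, one checks by direct iteration that $(0,0)$ is fixed and that $(1/2,1/2)$, $(0,1/2)$, $(1/2,0)$ are periodic with rotation vectors $(n,n)$, $(n,0)$, $(0,n)$ respectively; the Misiurewicz--Ziemian convexity property then gives $\rho(f_n) = [0,n]^2\textrm{ mod }\Z^2$. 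The analogous computation with $G_1(x,y) = (x+\phi(y),\, y+\phi(x+\phi(y)))$ yields $\rho(g_1) = [0,1]^2$, whence $\rho(g_n) = n\rho(g_1) = [0,n]^2\textrm{ mod }\Z^2$.

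For the translation lengths, $|g_n| = n|g_1|$ by homogeneity of the translation length under iteration, and $|g_1| > 0$ follows from \cite{BHMMW} since $\rho(g_1) = [0,1]^2$ has nonempty interior. For the upper bound $|f_n| \leqslant 2$, the key observation is that $h^n$ preserves each horizontal circle setwise. Taking $\alpha = \T \times \{0\}$, we get $f_n(\alpha) = v^n(\alpha) = \{(x, n\phi(x)) : x \in \T\}$. With $\beta = \{0\} \times \T$, the relation $\phi(0)=0$ shows that both $\alpha \cap \beta$ and $\beta \cap v^n(\alpha)$ reduce to the single transverse point $(0,0)$. Hence $(\alpha, \beta)$ and $(\beta, f_n(\alpha))$ are both edges of $\Cfin{}(\T^2)$, giving $d(\alpha, f_n(\alpha)) \leqslant 2$. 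Since $f_n$ acts isometrically on $\Cfin{}(\T^2)$, subadditivity yields $d(\alpha, f_n^k(\alpha)) \leqslant 2k$ for all $k$, whence $|f_n| \leqslant 2$.

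For part~(1), my plan is to exploit the homogeneity $|f^q| = q|f|$ and $\rho(f^q) = q\rho(f)$ to reduce the statement to a compact regime of $EW$. Given $f$ with $w := EW(\rho(f)) \in (0,c]$, setting $q = \lceil 1/w \rceil$ yields $EW(\rho(f^q)) = qw \in [1,1+c]$ and
\[ \frac{|f|}{w} = \frac{|f^q|}{qw} \geqslant \frac{|f^q|}{1+c}, \]
so the theorem reduces to establishing a uniform lower bound $|g| \geqslant \mu_c > 0$ for every $g \in \Homeo_0(\T^2)$ with $EW(\rho(g)) \in [1,1+c]$, from which the statement follows with $m_c = \mu_c/(1+c)$. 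This uniform lower bound is the main content of part~(1), and I would obtain it by revisiting quantitatively the proof in \cite{BHMMW} that $EW(\rho(g)) > 0 \Rightarrow |g| > 0$: after $\SL_2(\Z)$-conjugation the horizontal width of $\rho(g)$ lies in $[1,1+c]$; the results of \cite{MZ1} combined with Franks' theorem produce periodic orbits whose horizontal rotation vectors span an interval of length of order $1$; these orbits spread a vertical curve $\alpha$ so that $g^n(\alpha)$ intersects a fixed horizontal curve in a number of points growing linearly in $n$; and the hyperbolicity estimates of \cite{BHMMW} translate this intersection growth into a positive growth rate for $d(\alpha, g^n(\alpha))$ in $\Cfin{}(\T^2)$. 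The main obstacle is precisely the uniformity of this rate over the compact regime $EW \in [1,1+c]$: this is exactly the kind of uniform estimate that the introduction indicates is missing from the original \cite{BHMMW} arguments, and extracting it requires genuinely new quantitative input rather than a direct black-box application of their results.
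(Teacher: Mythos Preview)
Your treatment of part~(2) is correct and essentially identical to the paper's: same four fixed points with the same rotation vectors, same convexity argument, same horizontal/vertical curve trick for $|f_n|\leqslant 2$, and the same appeal to \cite{BHMMW} for $|g_1|>0$.

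For part~(1), your reduction by homogeneity to the regime $EW(\rho(g))\in[1,1+c]$ is fine and parallels the paper's use of powers. The gap is in how you obtain the uniform bound $\mu_c$. You propose to extract it from the intersection-growth machinery of \cite{BHMMW}, and you yourself flag this as requiring ``genuinely new quantitative input''; as stated this is not a proof but a hope. The paper takes a completely different and much more concrete route at this step. It first proves a purely geometric lemma (Proposition~\ref{prop:CompareWidth}): if $EW(C)>4$ then the interior of $C$ contains three non-aligned integer points. Then it invokes Franks' theorem to realize these as fixed points with the corresponding rotation vectors, the Llibre--McKay argument to see that $f$ is pseudo-Anosov relative to those three points, the comparison lemma of \cite{BHW} to pass to the ordinary curve graph of the thrice-punctured torus, and finally the explicit Gadre--Tsai lower bound $\frac{1}{222}$ for pseudo-Anosov translation lengths there. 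Combining this with your reduction (or the paper's slightly different bookkeeping) gives explicit constants such as $m_c=\frac{1}{\max(888c,1110)}$.

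In short: the missing idea in your proposal is to trade the analytic intersection-growth estimate for the chain ``large $EW$ $\Rightarrow$ three integer points $\Rightarrow$ relative pseudo-Anosov $\Rightarrow$ Masur--Minsky/Gadre--Tsai bound'', which supplies the uniform $\mu_c$ immediately and with explicit constants.
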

We suggest in Figure~\ref{fig:EWvsLength} a schematic picture of the possible
values of $EW(\rho(f))$ and~$|f|$.
\begin{figure}[htb]
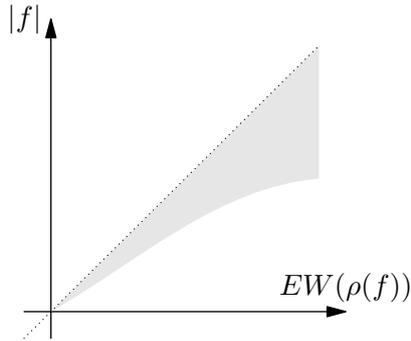

\begin{center}
\begin{asy}
  import geometry;
  
  path p = (0,0){dir(30)}..{dir(5)}(100,50)--(100,100)--cycle;
  
  draw ((-10,0)--(110,0), Arrow(7));
  draw ((0, -10)--(0, 110), Arrow(7));
  //label ("{\small $EW(\rho(f))$}", (0,110), W);
  //label ("{\small $|f|$}", (110,0), N);
  label ("{\small $|f|$}", (0,110), W);
  label ("{\small $EW(\rho(f))$}", (110,0), N);
  
  fill (p, lightgray);
  
  draw ((-10,-10)--(100,100), dotted);
\end{asy}
\end{center}
\caption{A schematic picture of the possible values of $EW(\rho(f))$ versus~$|f|$.}
\label{fig:EWvsLength}
\end{figure}
Here let us denote by $m_c$ the lowest possible constant for which statement (1) of
Theorem~\ref{thm:RotVSLength} holds. From the definition, the map $c\mapsto m_c$ is decreasing, and from the inequalities stated above, we know that $m_c\leqslant 1$
for all $c$, and that $m_n\leqslant \frac{2}{n}$ for all $n\geqslant 1$, in
particular $m_c$ goes to $0$ as $c$ goes to $+\infty$.
In Paragraph~\ref{ssec:Relations} we will see that
$m_c\geqslant\frac{1}{\max(888c, 1110)}$ for every $c>0$,
though this is obviously not sharp. This is all we know about
the map $c\mapsto m_c$, and Figure~\ref{fig:EWvsLength} is schematic.

Theorem~\ref{thm:RotVSLength}, (2), combined with the continuity of the
asymptotic translation length proved by Bowden, Hensel, Mann, Militon and Webb,
gives readily an answer to the question raised at the end of
Paragraph~\ref{ssec:WC} above.
\begin{corollary}\label{cor:RotNotEnough}
  There are maps with same rotation set and distinct weak conjugacy classes.
\end{corollary}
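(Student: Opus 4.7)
The plan is to deduce the corollary directly from Theorem~\ref{thm:RotVSLength}(2) together with the fact, due to Bowden, Hensel, Mann, Militon and Webb, that the asymptotic translation length $|\cdot|\colon\Homeo_0(\T^2)\to\R_{\geqslant 0}$ is a \emph{continuous} conjugacy invariant. Since weak conjugacy is, by definition, the equivalence relation generated by all continuous conjugacy invariants taking values in Hausdorff spaces, any two weakly conjugate maps must have the same asymptotic translation length. Hence it suffices to exhibit two maps with identical rotation sets but distinct values of $|\cdot|$.

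First I would invoke Theorem~\ref{thm:RotVSLength}(2): the two families $(f_n)$ and $(g_n)$ satisfy $\rho(f_n)=\rho(g_n)=[0,n]^2\bmod\Z^2$ for every $n\geqslant 1$, while $|f_n|\leqslant 2$ and $|g_n|=n|g_1|$ with $|g_1|>0$. Then I would choose an integer $n$ large enough that $n|g_1|>2$, for instance any $n>2/|g_1|$. For such an $n$ we have
\[
|g_n| = n|g_1| > 2 \geqslant |f_n|,
\]
so $f_n$ and $g_n$ are distinguished by the continuous conjugacy invariant $|\cdot|$, hence are not weakly conjugate, even though they share the same rotation set. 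This gives the desired pair of examples.

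There is essentially no obstacle here once Theorem~\ref{thm:RotVSLength}(2) is in hand: the corollary is a one-line consequence, the only point to be verified being that continuity plus conjugacy invariance genuinely implies invariance under weak conjugacy, which is immediate from the definition of weak conjugacy in Subsection~\ref{ssec:WC}.
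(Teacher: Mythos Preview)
Your proof is correct and follows exactly the approach indicated in the paper: invoke Theorem~\ref{thm:RotVSLength}(2) to produce $f_n$ and $g_n$ with identical rotation sets but different asymptotic translation lengths (for $n$ large), and then use the continuity of $|\cdot|$ from \cite{BHMMW} to conclude they lie in distinct weak conjugacy classes. Your added remark that one needs $n>2/|g_1|$ to guarantee $|f_n|\neq|g_n|$ is a helpful clarification that the paper leaves implicit.
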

It is tempting however to ask whether maps with rotation sets of empty interior
are all weakly conjugate. 
In this direction, Militon has proved that every homeomorphism of the closed annulus whose rotation set is a single vector has conjugates arbitrarily close to the corresponding rotation (see~\cite[Theorem 1.5]{M18}). In the same paper, he conjectures that the same property holds for the torus and higher genus surfaces. This has been proved by Kwapisz in the torus case when the rotation vector is totally irrational (\cite{K03}).
Another basic open
problem concerns the group of homeomorphisms of the closed annulus that are the identity on the boundary: is there a single weak conjugacy class, or can one find some continuous conjugacy invariant on this group?

\subsection{Dynamical consequences}

The absence of uniform bounds in Theorem~\ref{thm:RotVSLength} has the following
easy consequence.
\begin{corollary}\label{cor:NoBloodyRoots}
  For every $\varepsilon>1$, there exists
  a non empty open set $U\subset G/\!/G$, with $G=\Homeo_0(\T^2)$, such that
  for every $[f]\in U$, the map $f$ has no $\frac{p}{q}$-th root,
  for any~$\frac{p}{q}\geqslant \varepsilon$.
\end{corollary}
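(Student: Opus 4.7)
The plan is to build $U$ as a neighborhood of the weak conjugacy class $[f_n]$ (with $f_n=v^n\circ h^n$ from Theorem~\ref{thm:RotVSLength}(2)), for an integer $n$ chosen sufficiently large in terms of $\varepsilon$. The heuristic is that the ratio $|f_n|/EW(\rho(f_n))\leq 2/n$ tends to~$0$, while Theorem~\ref{thm:RotVSLength}(1) guarantees that for any fixed bound on $EW(\rho(\cdot))$, this ratio admits a positive lower bound $m_c>0$; this tension will obstruct the existence of $p/q$-th roots with $p/q\geq\varepsilon$.

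First I would record the algebra of roots. If $h\in\Homeo_0(\T^2)$ satisfies $h^p=f^q$, then $p|h|=|h^p|=|f^q|=q|f|$ and, up to integer translations, $p\rho(h)=\rho(h^p)=\rho(f^q)=q\rho(f)$. Thus $|h|=\tfrac{q}{p}|f|$ and $EW(\rho(h))=\tfrac{q}{p}EW(\rho(f))$. Applied to $f=f_n$, any $p/q$-th root $h$ with $p/q\geq\varepsilon$ would satisfy $|h|\leq\tfrac{2q}{p}$ and $EW(\rho(h))=\tfrac{nq}{p}\leq\tfrac{n}{\varepsilon}$. Plugging $c=\tfrac{nq}{p}$ into Theorem~\ref{thm:RotVSLength}(1) applied to $h$, together with the monotonicity of $c\mapsto m_c$, yields
\[
m_{n/\varepsilon}\cdot\tfrac{nq}{p}\;\leq\;|h|\;\leq\;\tfrac{2q}{p},\qquad\text{i.e.,}\qquad n\cdot m_{n/\varepsilon}\;\leq\;2.
\]
Choosing $n$ so that $n\cdot m_{n/\varepsilon}>2$ then produces a contradiction, excluding every $p/q$-th root of $f_n$ with $p/q\geq\varepsilon$.

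To pass from $f_n$ to an open set in $G/\!/G$, I would use the two relevant continuous invariants: the asymptotic translation length $|\cdot|$, continuous by~\cite{BHMMW}, and the interior of the rotation set, continuous by Misiurewicz--Ziemian. Since $\rho(f_n)=[0,n]^2$ has nonempty interior, and since for any compact convex set with nonempty interior the essential width equals that of the closure of the interior, the map $f\mapsto EW(\rho(f))$ is continuous on a neighborhood of $f_n$. Defining
\[
\tilde U\;=\;\bigl\{\,f\in\Homeo_0(\T^2)\;:\;|f|<|f_n|+\delta,\;\;EW(\rho(f))>n-\delta,\;\;\mathrm{int}(\rho(f))\neq\emptyset\,\bigr\}
\]
for sufficiently small $\delta>0$, the same estimates rule out $p/q$-th roots with $p/q\geq\varepsilon$ for every $f\in\tilde U$. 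Since $\tilde U$ is cut out by continuous conjugacy invariants, it is an open, weakly-conjugacy-saturated subset of $\Homeo_0(\T^2)$, so its image $U$ in $G/\!/G$ is the required non-empty open neighborhood of $[f_n]$.

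The delicate step is the quantitative inequality $n\cdot m_{n/\varepsilon}>2$. The explicit bound $m_c\geq\tfrac{1}{\max(888c,1110)}$ from Paragraph~\ref{ssec:LenVSRot} only yields this for $\varepsilon>1776$; to cover every $\varepsilon>1$, one must appeal to a finer analysis of $m_c$ in the regime of small $c$, where the relation $|f|\approx EW(\rho(f))$ tightens. The content of the corollary is really that the \emph{non-uniformity} displayed by Theorem~\ref{thm:RotVSLength}(2), namely $|f_n|/EW(\rho(f_n))\to 0$, prevents the existence of arbitrarily high roots on an open set of weak conjugacy classes.
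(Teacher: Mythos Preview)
Your strategy is natural and is in fact the one the paper mentions as an afterthought after its own proof: anchoring at $f_N$ with $N$ large does produce explicit maps with no $\frac{p}{q}$-th root for $\frac{p}{q}>N$. But as a proof of the corollary for \emph{every} $\varepsilon>1$ it has a genuine gap, which you yourself flag in your last paragraph. The inequality you need, $n\,m_{n/\varepsilon}>2$, simply cannot be extracted from what the paper proves about $m_c$: the only lower bound available is $m_c\geqslant 1/\max(888c,1110)$, and plugging this in gives $n\,m_{n/\varepsilon}\leqslant\varepsilon/888$ for large $n$, which exceeds $2$ only when $\varepsilon>1776$. No ``finer analysis of $m_c$ for small $c$'' is carried out anywhere in the paper; on the contrary, the text says explicitly that the listed inequalities are ``all we know about the map $c\mapsto m_c$''. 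So your argument as written proves the corollary only for $\varepsilon>1776$.

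The paper's proof sidesteps this quantitative obstacle with a soft argument that uses only $m_1$. The key observations are: (i) the ratio $r(g)=|g|/EW(\rho(g))$ is invariant under passing to roots (you essentially note this too), and (ii) on the set $U_0=\{[h]:r(h)<m_1/2\}$ one automatically has $EW(\rho(h))>1$, because $EW(\rho(h))\leqslant 1$ would force $r(h)\geqslant m_1$ by the very definition of $m_1$. Thus $EW$ has a finite positive infimum $ew_0$ on $U_0$. One then takes $U=U_0\cap\{EW\in(ew_0,\varepsilon\,ew_0)\}$, which is nonempty since $f_n\in U_0$ for large $n$ and $ew_0$ is the infimum. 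Any $\frac{p}{q}$-th root $g$ of $h\in U$ satisfies $r(g)=r(h)<m_1/2$, hence $g\in U_0$, hence $EW(\rho(g))\geqslant ew_0$; combined with $EW(\rho(h))=\frac{p}{q}EW(\rho(g))<\varepsilon\,ew_0$ this forces $\frac{p}{q}<\varepsilon$. The point is that one never needs to estimate $m_c$ at the (unknown) scale $c=EW(\rho(g))$: the root is trapped in $U_0$ by invariance of $r$, and the bound on $EW$ comes for free from the definition of $ew_0$. This is the idea your argument is missing.
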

In the statement above, we say that a map $h$ is a $\frac{p}{q}$-root of a
map $f$, when $h^p=f^q$.

Patrice Le Calvez informs us that it is possible to construct a horseshoe
homeomorphism having a $C^0$-neighborhood consisting of maps without any roots,
using Conley index theory to prove this last property.
On the other hand our method relies simply on the combination of two numerical
invariants (the two involved in Theorem~\ref{thm:RotVSLength}) and is very
robust, as this obstruction from having roots is stable under $C^0$ perturbation
as well as weak conjugacy.

All the homogeneous quasi-morphisms constructed in \cite{BHW}, as
highlighted there, are continuous conjugacy invariants.
These are most probably independent of both the rotation set and the translation
length. As a consequence of this continuity, together with Theorem~\ref{thm:t1flow},
we have the following.
\begin{corollary}
  The group $\Homeo_0(\Sigma)$ is not boundedly generated by time one maps of flows: 
  there does not exist an integer $N$ such that any
  element of $\Homeo_0(\Sigma)$  is the composition of $N$ time one maps of flows.
\end{corollary}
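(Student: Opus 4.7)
The plan is to pit the unboundedness of a continuous quasi-morphism against the fact that time one maps of flows lie in its kernel. As recalled just above the statement, the paper \cite{BHW} provides homogeneous quasi-morphisms $q\colon \Homeo_0(\Sigma) \to \R$ that are continuous and conjugacy invariant, and take arbitrarily large values; fix such a $q$, with defect $D$.

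Being continuous and conjugacy invariant, $q$ is constant on each weak conjugacy class. By Theorem~\ref{thm:t1flow}, every time one map $f$ of a $C^1$ flow on $\Sigma$ is weakly conjugate to the identity, so $q(f) = q(\id) = 0$. Iterating the defining inequality of a quasi-morphism then gives, for any $f_1, \dots, f_N \in \Homeo_0(\Sigma)$,
\[
    \left| q(f_1 \circ \cdots \circ f_N) - \sum_{i=1}^N q(f_i) \right| \leq (N-1)D.
\]
Specialising to time one maps of flows, each $q(f_i)$ vanishes and we obtain the uniform bound $|q(f_1\circ\cdots\circ f_N)| \leq (N-1)D$.

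Finally, since $q$ is unbounded, for any prescribed $N$ we may find $h \in \Homeo_0(\Sigma)$ with $|q(h)| > (N-1)D$, and such an $h$ cannot be written as a composition of $N$ time one maps of flows. I do not foresee any serious obstacle: all the ingredients---existence of a continuous unbounded homogeneous quasi-morphism on $\Homeo_0(\Sigma)$, the weak conjugacy statement for time one maps of flows, and the standard sub-additivity of quasi-morphisms---are either recalled in the excerpt or are classical.
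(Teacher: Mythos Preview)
Your proof is correct and follows essentially the same approach as the paper: both use a continuous homogeneous quasi-morphism from \cite{BHW}, observe via Theorem~\ref{thm:t1flow} that it vanishes on time one maps of flows, and then invoke the defect bound $(N-1)D$ to contradict unboundedness. The only cosmetic difference is that the paper fixes a single $h$ with $\Phi(h)\neq 0$ and lets $\Phi(h^n)=n\Phi(h)\to\infty$ via homogeneity, whereas you directly pick $h$ with $|q(h)|>(N-1)D$; these are equivalent since a nontrivial homogeneous quasi-morphism is automatically unbounded.
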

The proof is a very classical use of unbounded quasi-morphisms; we include it here.
\begin{proof}
  Let $\Phi\colon\Homeo_0(\Sigma)\to\R$ be any non trivial homogeneous quasi-morphism,
  as constructed in \cite{BHW}. As recalled there, $\Phi$ is continuous. Thus it
  vanishes identically on the weak conjugacy class of the identity.
  
  Let $h$ be such that $\Phi(h)\neq 0$.
  Then $\Phi(h^n)=n\Phi(h)$ goes to infinity when $n$ goes to infinity.
  On the other hand, assume that $h^n$ may be written as the composition
  of $N$ time one maps of flows $f_1,\ldots,f_N$.
  By Theorem~\ref{thm:t1flow}, $\Phi(f_i)=0$ for each $i$, and thus
  $\Phi(h^n)\leq (N-1)\Delta(\Phi)$,
  where $\Delta(\Phi)$ denotes the default of the quasi-morphism.
  This shows that there is no $N$ such that for every $n$, $h^n$ may be written
  as the composition of $N$ time one maps of flows.
\end{proof}

We would like to thank Kathryn Mann, Emmanuel Militon and Patrice Le Calvez for
encouraging and enlightening discussions.

%%%%%%%%%%%%%%%%%%%%%%%%%%%%%%%%%%%%%%%%%%%%%%%%%%%%%%%%%%%%%%%%%%%%%
%%%%%%%%%%%%%%%%%%%%%%%%%%%%%%%%%%%%%%%%%%%%%%%%%%%%%%%%%%%%%%%%%%%%%
%%%%%%%%%%%%%%%%%%%%%%%%%%%%%%%%%%%%%%%%%%%%%%%%%%%%%%%%%%%%%%%%%%%%%
%%%%%%%%%%%%%%%%%%%%%%%%%%%%%%%%%%%%%%%%%%%%%%%%%%%%%%%%%%%%%%%%%%%%%

\section{Proof of Theorem~\ref{thm:t1flow}}

\subsection{Some generalities regarding weak conjugacy}
We begin with a straightforward, useful exercise in general topology, which,
unfortunately, is not easy to locate in standard textbooks.
\begin{observation}
  For any topological space $X$, the intersection of all the Hausdorff
  equivalence relations on $X$ is a Hausdorff equivalence relation.
\end{observation}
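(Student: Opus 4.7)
The plan is to reduce the claim to the fact that products of Hausdorff spaces are Hausdorff. First I would point out that a Hausdorff equivalence relation $\sim$ on $X$ admits two equivalent descriptions: either $X/\!\sim$, equipped with the quotient topology, is Hausdorff; or $\sim$ is the kernel of \emph{some} continuous map $\varphi\colon X\to Y$ into \emph{some} Hausdorff space $Y$. The two descriptions match because the quotient map itself witnesses one direction, while in the other direction, given such a $\varphi$, distinct classes in $X/\!\sim_\varphi$ have distinct images in $Y$, and the preimages by the canonical injection $X/\!\sim_\varphi\hookrightarrow Y$ of disjoint neighborhoods in $Y$ separate them in the quotient topology. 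This second characterization is the one well-suited to our purpose, because it is preserved under taking products.

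Next I would note that an arbitrary intersection of equivalence relations is again an equivalence relation, since reflexivity, symmetry and transitivity all pass to intersections; so if $\{\sim_\alpha\}_{\alpha\in A}$ denotes the family of all Hausdorff equivalence relations on $X$ (a genuine set, since each $\sim_\alpha$ is a subset of $X\times X$), then $\sim\ :=\ \bigcap_{\alpha\in A}\sim_\alpha$ is an equivalence relation.

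The heart of the argument is to exhibit $\sim$ as the kernel relation of a single continuous map into a Hausdorff space. For this I would take $Y:=\prod_{\alpha\in A}X/\!\sim_\alpha$ with the product topology: it is Hausdorff as a product of Hausdorff factors. The diagonal map $\Phi\colon X\to Y$, $x\mapsto (q_\alpha(x))_{\alpha\in A}$, where $q_\alpha\colon X\to X/\!\sim_\alpha$ is the quotient map, is continuous since each of its components is. By construction, $\Phi(x)=\Phi(y)$ if and only if $x\sim_\alpha y$ for every $\alpha\in A$, i.e.\ if and only if $x\sim y$. Hence $\sim$ is the kernel of a continuous map to a Hausdorff space, and by the equivalence recalled above, $X/\!\sim$ is Hausdorff.

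Honestly, I do not anticipate a serious obstacle; the statement is essentially a set-theoretic observation dressed in topological clothing, and the only point that warrants a line of justification is the equivalence of the two characterizations of Hausdorff equivalence relations used above.
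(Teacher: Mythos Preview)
Your argument is correct. The key steps---the equivalence between ``$X/\!\sim$ is Hausdorff'' and ``$\sim$ is the kernel of a continuous map into a Hausdorff space'', the diagonal into the product $\prod_\alpha X/\!\sim_\alpha$, and the identification of its kernel with the intersection---are all sound. In the equivalence step, the point you use implicitly is that the induced map $\bar\varphi\colon X/\!\sim\to Y$ is continuous by the universal property of the quotient topology, so pulling back separating neighborhoods from $Y$ gives separating neighborhoods in $X/\!\sim$; this is exactly what you said.

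As for comparison with the paper: there is nothing to compare. The paper states the observation as ``a straightforward, useful exercise in general topology, which, unfortunately, is not easy to locate in standard textbooks'' and gives no proof. Your write-up fills that gap cleanly, and the product-of-quotients construction you chose is the natural one; it also makes transparent the universal property mentioned immediately after the observation in the paper (any continuous map to a Hausdorff space factors through $X/\!\sim$), since such a map determines one of the $\sim_\alpha$ and hence one of the coordinate projections of your $\Phi$.
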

In the statement above, we view an equivalence relation on $X$ as a
subset of $X\times X$; this gives sense to the intersection. Also, we
say an equivalence relation $\mathcal{R}$ on $X$ is Hausdorff, if the
quotient $X/\mathcal{R}$, endowed with the quotient topology, is Hausdorff.

The exercise above says that every topological space $X$ admits a smallest
Hausdorff equivalence relation. Equivalently, every topological space $X$
admits a biggest Hausdorff quotient $X/\sim$, which also satisfies the universal
property that any continuous map $f\colon X\to Y$ from $X$ to a Hausdorff
topological space $Y$ factors in a unique way as $f=\bar{f}\circ p$,
where $p\colon X\to X/\sim$ is the quotient and $\bar{f}$ is continuous.

If $G$ is a topological group, we may denote by $G/\!/G$ the biggest
Hausdorff quotient of the quotient $G/G$, where $G$ acts on itself by
conjugation. We may define that two elements $g,h\in G$ are weakly conjugate if
$p(g)=p(h)$, where $p\colon G\to G/\!/G$ denotes the projection map.
As follows from the lines above, this definition is equivalent to the one
given in the introduction.

It follows from the definitions that for any $g\in G$, its weak conjugacy class
\[ \left\lbrace h\in G, p(g)=p(h) \right\rbrace \]
is a closed subset of $G$, invariant by conjugation: this observation
has already been used implicitly in the introduction, and we will continue
to use it without mentioning it explicitly.

In the following we consider time one maps of flows associated to vector fields,
and use the notation where  $\phi^1_X$ is the time one map of the flows $\phi_X$ 
associated to vector fields $X$. We say that two flows are weakly conjugate
whenever their respective time one maps are.

\subsection{Definitions}\label{ss:definitions}
Recall that $X$ is a Morse-Smale vector field on a compact surface
(see Figure \ref{f.MorseSmale}) if:
\begin{itemize}
  \item the nonwandering set $\Omega(X)$ consists of a finite number of
    singularities and periodic orbits, all of them hyperbolic;
  \item there is no connection between saddle-type singularities.
\end{itemize}

\begin{figure}[h]
  \centering
  \def\svgwidth{0.75\columnwidth}
  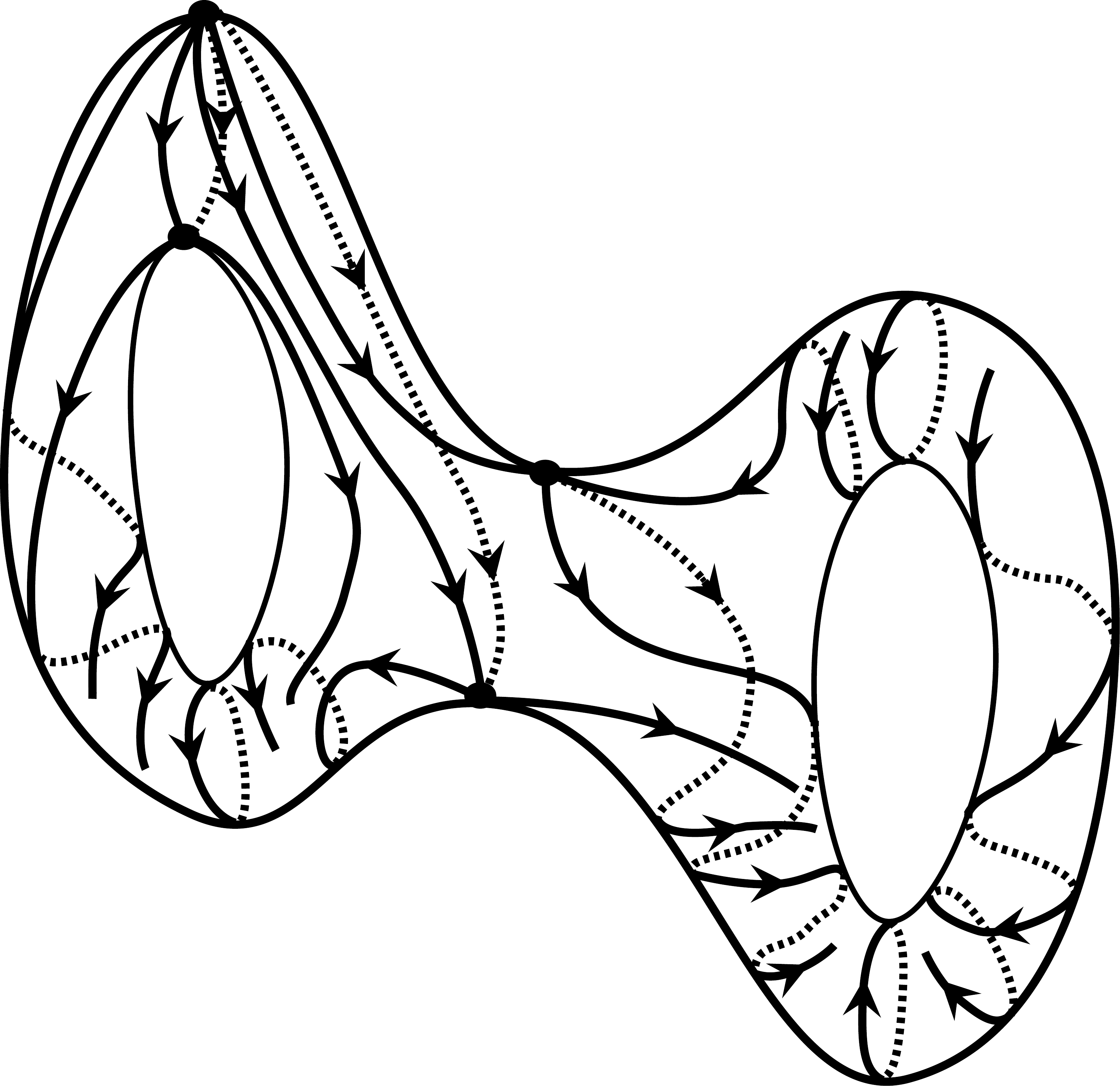
  \caption{Example of a Morse-Smale vector field on Surface of genus 2.
    It has four singularities (one repelling, three saddles) and three periodic
    orbits (one repelling and two attracting).}
  \label{f.MorseSmale}
\end{figure}

It is a classical result by M. Peixoto \cite{Peixoto} that the set
of Morse-Smale vector fields is $C^1$ (open and) dense inside the set of $C^1$ vector fields on a compact surface. Hence, to prove Theorem \ref{thm:t1flow} it is
enough to prove the following.

\begin{theorem}\label{t2flow}
  Let $X$ be a $C^1$ Morse-Smale vector field on a compact surface and
  let $\phi^1_X$ the time one map of flow generated by $X$. Then $\phi^1_X$ is
  weakly conjugate to the identity.
\end{theorem}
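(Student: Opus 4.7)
My plan is to show that $\id_\Sigma$ lies in the $C^0$-closure of the conjugacy class of $\phi^1_X$ in $\Homeo_0(\Sigma)$. Since every weak conjugacy class is closed and invariant under conjugation (as recalled in Subsection~2.1), this will force $\phi^1_X$ to be weakly conjugate to $\id_\Sigma$. Thus, fixing $\varepsilon>0$, the task reduces to constructing $g_\varepsilon \in \Homeo_0(\Sigma)$ such that $g_\varepsilon \circ \phi^1_X \circ g_\varepsilon^{-1}$ moves every point of $\Sigma$ by less than $\varepsilon$. The guiding intuition is that if $g_\varepsilon$ were smooth this conjugate would be the time-one map of the push-forward vector field $(g_\varepsilon)_* X$, and one wants to arrange for this push-forward to have small $C^0$ norm so that its time-one flow travels only a short distance.

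The construction exploits the Morse-Smale hypothesis. Let $\Omega(X)$ denote the finite union of hyperbolic singular points and periodic orbits of $X$. I would fix pairwise disjoint neighborhoods $U$ of $\Omega(X)$: small disks around each source, sink, or saddle, and small tubular annuli around each periodic orbit. In the complementary wandering region $\Sigma \setminus U$, $X$ is non-vanishing and, thanks to the absence of saddle connections, every orbit passes from one component of $\partial U$ to another in bounded time; this region therefore admits a finite cover by flow boxes with transverse cross-sections. The homeomorphism $g_\varepsilon$ is then designed to:
\begin{enumerate}
\item shrink each connected component of $U$, via an Alexander-type isotopy performed inside it, to have diameter less than $\varepsilon/2$;
\item in each flow box of $\Sigma\setminus U$, compress the flow direction by a large factor $\lambda$, so that the reparametrized time-one map moves points by at most $\varepsilon/2$ inside the box.
\end{enumerate}
A point whose forward orbit of length one stays within a single region then displaces by at most $\varepsilon/2$, while a point crossing between $U$ and $\Sigma\setminus U$ in unit time (in particular one traversing a saddle box) displaces by at most the sum of the two contributions, which stays below $\varepsilon$ provided all scales are chosen small enough.

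The main obstacle is the global gluing: fitting these local pieces together into a genuine homeomorphism of $\Sigma$ compatible with the Morse-Smale topology. Near an attractor, the shrinking of a trapping neighborhood must agree with the flow-box compression along its entry boundary, and similarly for repellers at exit boundaries. Near a saddle, the situation is more delicate, since the box neighborhood is not invariant and the local stable and unstable manifolds partition the surrounding wandering region into four sectors; the conjugation must preserve this partition while matching the hyperbolic self-similar scaling at the saddle (available from a $C^0$ linearization) with the flow-box compression in each adjacent sector. A smooth Lyapunov function for $X$ can organize these local charts coherently, by parametrizing orbits globally and ensuring that the various pieces of $g_\varepsilon$ fit together continuously across $\partial U$. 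Granted this construction, letting $\varepsilon \to 0$ yields the desired sequence of conjugates of $\phi^1_X$ converging uniformly to $\id_\Sigma$.
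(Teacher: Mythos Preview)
Your proposal has a genuine gap that the paper's proof is specifically designed to avoid. You claim you will exhibit $\id_\Sigma$ as a $C^0$-limit of conjugates of $\phi^1_X$, but in general this is impossible: the closure of the conjugacy class of $\phi^1_X$ need not contain the identity.

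The obstruction comes from the periodic orbits. Suppose $O$ is a periodic orbit of $X$ of period $r$. For any homeomorphism $g$, the curve $g(O)$ is again a periodic orbit of the conjugated flow, still of period $r$, and $g\circ\phi^1_X\circ g^{-1}$ pushes each point of $g(O)$ a fraction $1/r$ of the way around $g(O)$. Your step (1) proposes to make this displacement small by an ``Alexander-type'' shrinking of the tubular annulus around $O$, but an annulus is not a disk: if $O$ is essential in $\Sigma$ (as happens already for the simplest Morse-Smale flows on $\T^2$, e.g.\ $\dot x=1$, $\dot y=\sin 2\pi y$), then $g(O)$ is an essential simple closed curve and its diameter is bounded below by the systole of $\Sigma$. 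Hence the displacement of $g\circ\phi^1_X\circ g^{-1}$ along $g(O)$ stays bounded away from zero, uniformly in $g$. The paper flags exactly this point in the outline (Section~2.3): ``These periodic orbits cannot be slowed down by conjugation, since the periods are conjugacy invariants.''

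This is why the paper does \emph{not} attempt a single conjugation-then-limit. It uses the transitivity of weak conjugacy through a chain of limits of conjugates: first slow down to a field $Y$ supported in annuli around the periodic orbits (your flow-box compression, organized via Lyapunov level sets, does essentially this part); then, crucially, show that $\phi^1_Y$ is a limit of conjugates of a ``fibered rotation'' $\phi^1_T$ (Lemmas~\ref{lem:ModeleLocal} and~\ref{lem:ModeleLocalRot}); and finally that $\phi^1_T$ is itself a limit of conjugates of time-one maps in $\cA(\varepsilon,\dots,\varepsilon)$, whose periods are now $\varepsilon$ rather than $r_i$. The passage through $T$ and back is what circumvents the period obstruction, and it cannot be replaced by a direct conjugacy.
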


Working with Morse-Smale vector fields makes life much easier, since their
dynamics is fairly simple. More precisely, our proof will make use of two
crucial ingredients: (1) the existence of \emph{stable manifolds} for periodic
orbits, and (2) the existence of \emph{Conley circles}. We now recall both notions.

\subsubsection*{Stable manifolds} Let $O$ be an attracting hyperbolic periodic
orbit for the vector field $X$, and let $x$ be a point of $O$.
Then the \emph{local stable set} of $x$ is the image $W^s(x)$ of a $C^1$ embedded
arc $\gamma: [-1, 1] \to \Sigma$ such that $\gamma(0)=x$ and for
every $y \in W^s(x)$, $d(\Phi_{X}^t(y), \Phi_{X}^t(x))$ tends to zero when $t$
tends to $+\infty$. Furthermore, these stable local manifolds are positively
invariant under the flow, that is, for every $t>0$
we have $\Phi_{X}^t(W^s(x)) \subset W^s(\Phi_{X}^t(x))$.

\subsubsection*{Lyapounov functions}
K. Meyer~\cite{meyer} proved that if $X$ is a Morse-Smale vector field on a
manifold $M$ then there exists a $C^1$ map $\eta:M\to \R$ such that
\begin{itemize}
\item for any $x\in M$ and $t>0$ it holds
  that $\eta(\phi^t_X(x))\leqslant \eta(x)$ and equality holds if and only
  if $x$ is a singularity or a periodic orbit;
\item if $\beta_0,...,\beta_\ell$ denote the set of singularities and periodic
  orbits then $\eta(\beta_i)\neq\eta(\beta_j)$ if $i\neq j, 0\leqslant i,j \leqslant \ell$;
\item a point $x\in M$ is a critical point of $\eta$ if and only if $x$ is a
  singularity or a periodic orbit;
\item the vector field $X$ is transversal to the level sets of regular values of $\eta$.
\end{itemize}

\subsubsection*{Conley circles}
A {\em Conley circle} for a vector field $X$ is a $C^1$-submanifold
$C\subset\Sigma$, diffeomorphic to the circle, transverse to $X$, and
such that no orbit of the flow of $X$ meets $C$ more than once.
A finite union $C$ of pairwise disjoint Conley circles will be said {\em well
separating} if $\Sigma\smallsetminus C$ is the disjoint union of two open
sets $U^+$, $U^-$, such that for every $x\in C$, for all $t>0$,
$\phi_X^t(x)\in U^+$ and for all $t<0$, $\phi_X^t(x)\in U^-$.

If $\eta$ is a Lyapunov fonction for the vector field $X$, the level set of any
regular value of $\eta$ is a well-separating union of Conley circles.

\subsection{Outline of the proof}
Now we describe the main steps and ideas of the proof of
Theorem~\ref{t2flow}; the details will occupy the following subsections.
We start with a Morse-Smale vector field $X$.
Note that we do not leave the weak conjugacy class of $\phi_X^1$ by successively
conjugating and taking limits, and our strategy involves conjugations, limits,
as well as small perturbations of vector fields, until we reach the zero vector
field.

First, each family $C$ of Conley circles on $\Sigma$ whose orbits are pairwise
disjoint gives rise, through the flow of $X$, to a chart on $\Sigma$,
homeomorphic to $C\times \R$, on which the flow is vertical.
All flows of non-zero vector fields on $\R$ are conjugate, and similarly,
assuming the family of Conley circles is well separating,
we can conjugate $X$ to a multiple $s X$ with an appropriate
{\em slowdown function}, $s\colon\Sigma\to(0,1]$, defined
using this chart, taking small values near $C$, and $1$ outside a compact
subset of the chart. This conjugacy is first defined in the chart, and
coincides with some time of the flow,
$\Phi_X^{\tau_+}$, in the far future, and $\Phi_X^{\tau_-}$ in the past.
As a consequence, it can be extended outside of the chart image by
using $\Phi_X^{\tau_+}$ on $U^+$ and $\Phi_X^{\tau_-}$ on $U^-$, with the notation
above.

This first observation will enable us to slow down the flow of $X$ by
conjugacy, until, at the limit, that of a vector field $Y$ equal to zero
outside a regular neighborhood of the union of the periodic orbits.

These periodic orbits cannot be slowed down by conjugation, since the
periods are conjugacy invariants.
Still, the existence of the local stable
sets recalled above implies that the time one of the flow
of $Y$, on each remaining annulus, is conjugate to a very simple model. This will
imply that the time one $\Phi_Y^1$ is conjugate to arbitrarily
small perturbations of $\Phi_T^1$, where $T$ is a vector field which
has only periodic orbits in the annuli where $Y$ is nonzero
(the time one map of $T$ is a ``fibered rotation'').
Hence the times one of $T$ and $Y$ are weakly conjugate, and finally
there also exist arbitrarily small perturbations of $T$ which are
conjugate to arbitrarily small vector fields.

%%%%%%%%%%%%%%%%%%%%%%%%%%%%%%%%%%%%%%%%%%%%%
%%%%%%%%%%%%%%%%%%%%%%%%%%%%%%%%%%%%%%%%%%%%%
%%%%%%%%%%%%%%%%%%%%%%%%%%%%%%%%%%%%%%%%%%%%%
\subsection{Slowdown functions}\label{ssec:SlowDown}

Given a vector field $X$ on a manifold $\Sigma$, and a
diffeomorphism $f :\Sigma \to \Sigma'$, recall that $f_\ast X$ denotes the
image vector field on $\Sigma'$ defined by $f_\ast X(f(x)) = Df(x) X(x)$.
The flows of $X$ and of $f_\ast X$ are conjugate via the map $f$, that is, the
relation $\phi_{f_\ast X}^t(f(x)) = f(\phi_X^t(x))$ holds for every $x,t$.

\subsubsection*{Flows on $\R$} Let $X$ be a non-vanishing $C^1$ vector field
on $\R$. Then $X$ is conjugate to the constant vector field $\pm 1$: there exists
an orientation preserving diffeomorphism $f: \R \to \R$ such that $f_*X = \pm 1$.
Furthermore, such an $f$ is essentially unique: every other conjugacy
equals $f + t_0$ for some $t_0 \in \R$.

\subsubsection*{Slowdown functions on $\R$} Let $s: \R \to (0,1]$ be a $C^1$ function
which equals $1$ outside some compact subset of $\R$. We will call such a
function a {\em slowdown function} on $\R$.
Denote $X_s = sX$. According to the above paragraph, there exists a
diffeomorphism $f: \R \to \R$ such that $X_s = f_*X$. Furthermore, $X$ and $X_s$
coincide outside a compact subset $[\tau_-, \tau_+]$ of $\R$; as a consequence,
there exist two constants $t^-(s), t^+(s)$ such that the conjugacy $f$ coincides
with $\phi_X^{t^-(s)}$ on $(-\infty, \tau_-]$, and coincides
with $\phi_X^{t^+(s)}$ on $[\tau_+, +\infty)$.

\subsubsection*{Slowdown functions on $\Sigma$} Back to our surface $\Sigma$,
let $C$ be a union of pairwise disjoint Conley circles. Assume that the orbits
of any two Conley circles of the family are disjoint (this is the case
in particular if $C$ is well separating).
Then the map $\Psi: C \times \R \to \Sigma$ defined by the formula
$\Psi(c,t) = \phi_X^t(c)$ is injective, and defines a chart that sends the
unit vertical vector field $\mathbbm{1}(c, t) = (0,1)$ to our vector field $X$.
Given a slowdown function $s$ on $\R$, we consider the
function $s_{C, X}: \Sigma \to (0,1]$ defined by $s_{C,X}(\Psi(c,t)) = s(t)$ on
the image of the chart $\Psi$, and equal to $1$ elsewhere. We call this function
a slowdown function on $\Sigma$.
Let $f: \R \to \R$ be such that $f_\ast 1 = s$, and
let $F = \mathrm{Id} \times f: C \times \R \to C \times \R$, so
that $F_\ast \mathbbm{1} = (0, s(t))$. Then $\Psi$ sends this vector
field $(0, s(t))$ to the vector field $s_{C, X} X$.
On the image of the chart $\Psi$, the vector fields $X$ and $s_{C, X} X$ are
conjugate via the map $\Psi F \Psi^{-1}$. This map coincides with $\Phi_X^{t^-(s)}$
on $\Psi(C \times (-\infty, \tau_-])$, and coincides with $\Phi_X^{t^+(s)}$ on
$\Psi(C \times [\tau_+, +\infty))$.
If furthermore the family $C$ of Conley circles is well separating, then we can
define a global conjugacy between the vector fields $X$ and $s_{C, X} X$ by
extending the map $\Psi F \Psi^{-1}$ with $\Phi_X^{t^-(s)}$
on $U^- \smallsetminus \Psi(C \times \R)$
and with $\Phi_X^{t^+(s)}$ on $U^+ \smallsetminus \Psi(C \times \R)$.

\subsubsection*{Stopping functions} Passing to the limit, a map $s\colon\R\to[0,1]$
equal to $1$ outside some compact subset of $\R$ will be called a
{\em stopping function}. Such a function defines a {\em stopping function}
on $\Sigma$ by the same formula $s_{C,X}(\Psi(c,t)) = s(t)$ on the image of
the chart $\Psi$, and $1$ elsewhere.

The previous paragraph proves the following.

\begin{lemma}\label{lem:SlowDown}
  If $s$ is a slowdown function associated to a well separating family of
  Conley circles, then the flows of $X$ and $X_s = sX$ are conjugate.
  If $s$ is a stopping function associated to a well separating family of
  Conley circles, then the time one map
  of the vector field $X_s = sX$ is a limit of conjugates of the time 1 map
  of $X$; in particular, they are weakly conjugate.
\end{lemma}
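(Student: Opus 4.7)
The plan is to verify that the piecewise construction outlined in the preceding paragraphs genuinely defines a global homeomorphism conjugating the two flows in the slowdown case, and then to deduce the stopping statement by a limiting argument.

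For the slowdown case, write $\Phi$ for the candidate conjugacy: equal to $\Psi F \Psi^{-1}$ on the image of the chart $\Psi\colon C\times\R\to\Sigma$, to $\phi_X^{t^-(s)}$ on $U^-\smallsetminus\Psi(C\times\R)$, and to $\phi_X^{t^+(s)}$ on $U^+\smallsetminus\Psi(C\times\R)$. First I would check that these three formulas agree on overlaps: on $\Psi(C\times(-\infty,\tau_-])$ the chart formula reduces to $\phi_X^{t^-(s)}$, because there the diffeomorphism $f$ on $\R$ acts as $t\mapsto t+t^-(s)$, and symmetrically at $+\infty$. The well-separating hypothesis is what makes the extension consistent: any point of $U^+$ either lies in $\Psi(C\times[\tau_+,+\infty))$, or its entire backward orbit stays in $U^+$ and hence never meets $C$, so the chart and the extension formula never compete; same for $U^-$ in the past. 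Continuity of $\Phi$ at the chart boundary then reduces to continuity of $\phi_X^{t^\pm(s)}$, and $\Phi$ is easily seen to be bijective, hence a homeomorphism since $\Sigma$ is compact. Finally, the conjugation relation $\Phi\circ\phi_X^t=\phi_{X_s}^t\circ\Phi$ holds on the chart image by construction of $F$, and on its complement because $X_s=X$ there while $\Phi$ is a time map of the flow of $X$, which tautologically commutes with $\phi_X^t$.

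For the stopping statement, I would approximate $s$ by a sequence of smooth slowdown functions $s_n\colon\R\to(0,1]$, for instance a mollification of $\max(s,1/n)$, arranged to remain equal to $1$ outside a fixed compact set and to converge uniformly to $s$. By the first part of the lemma, each time one map $\phi_{X_{s_n}}^1$ is conjugate to $\phi_X^1$ in $\Homeo_0(\Sigma)$. Since $X$ is bounded on the compact surface $\Sigma$ and $s_n\to s$ uniformly, the vector fields $X_{s_n}=s_n X$ converge uniformly to $X_s=sX$, and standard continuous dependence of flows on vector fields at fixed finite time gives $\phi_{X_{s_n}}^1\to\phi_{X_s}^1$ uniformly on $\Sigma$. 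This places $\phi_{X_s}^1$ in the closure of the conjugacy class of $\phi_X^1$, so the two time one maps are weakly conjugate.

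The main, quite mild, obstacle is a regularity issue: the slowdown construction as stated requires $s$ to be at least $C^1$ so that $f$ is a $C^1$ diffeomorphism; this forces me to mollify the $\max(s,1/n)$, which is harmless because only uniform convergence is needed to pass to the limit on time one maps. A secondary point is that the vector field $X_s$ may fail to have unique integral curves on the (compact) zero set of $s$, but these zeros are equilibria of $X_s$, and the relevant time one map is in any case unambiguously defined as the uniform limit of the well-defined maps $\phi_{X_{s_n}}^1$, which is the only property needed for weak conjugacy.
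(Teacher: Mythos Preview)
Your proof is correct and follows essentially the same approach as the paper: the paper's proof is precisely the construction in the paragraph preceding the lemma (the paper simply says ``the previous paragraph proves the following''), and your limiting argument for the stopping case makes explicit what the paper compresses into the phrase ``passing to the limit.'' One small remark: in verifying the conjugacy relation outside the chart image, you use that $\phi_{X_s}^t=\phi_X^t$ there, which requires not only $X_s=X$ pointwise but also that the entire $X$-orbit of such a point stays outside $\Psi(C\times\R)$; this holds because that complement is exactly the union of $X$-orbits not meeting $C$, hence $X$-invariant, but it is worth saying.
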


%%%%%%%%%%%%%%%%%%%%%%%%%%%%%%%%%%%%%%%%%%%%%
%%%%%%%%%%%%%%%%%%%%%%%%%%%%%%%%%%%%%%%%%%%%%
%%%%%%%%%%%%%%%%%%%%%%%%%%%%%%%%%%%%%%%%%%%%%
\subsection{First step of the proof
of Theorem~\ref{thm:t1flow}}\label{ssec:DebutPreuveThm1}

Let $X$ be a Morse-Smale vector field on $\Sigma$.
Let us fix a Lyapunov function $\eta\colon\Sigma \to \R$ for $X$.
Consider any periodic orbit $O$ of $X$. Then the inverse image under $\eta$ of a
small interval around $\eta(O)$ has a connected component which is a small
annular neighborhood $A$ of $O$.
The boundary of $A$ is the union of two Conley circles which is
well separating. Applying Lemma~\ref{lem:SlowDown}, we get a vector field
$s X$ which is weakly conjugate to $X$, with $s$ a stopping function, which
vanishes exactly on a small regular neighborhood of the boundary of $A$.
It should be noted that the boundary components
of the set where $s$ vanishes are Conley circles of~$X$.

Let $O_1,\ldots,O_k$ denote the periodic orbits of $X$, $s_1,\ldots,s_k$ the
stopping functions, and $A_1,\ldots,A_k$ the annuli as obtained above.
We can arrange that the supports of the maps $1-s_i$ are disjoint.
Thus, the stopping steps as above can be applied
successively, each Conley circle of $X$ remaining a Conley circle until the step
when it is used. After successive applications of Lemma~\ref{lem:SlowDown}, the vector
field $Z= s_1\cdots s_k X$ is such that the time one maps $\phi_X^1$ and
$\phi_Z^1$ are weakly conjugate.
Let $Y$ denote the vector field that is equal to $Z$ in restriction
to $\Sigma_p=\cup_{i=1}^k A_k$, and $0$ on the rest of the surface.
\begin{lemma}\label{lem:Flow-Step1}
  The time one maps $\phi_Y^1$ and $\phi_X^1$ are weakly conjugate.
\end{lemma}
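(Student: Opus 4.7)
The plan is to realize $\phi_Y^1$ as a uniform limit of time-one maps of vector fields each conjugate to $Z$, and conclude by the closedness of the weak conjugacy class of $\phi_X^1$. Since $\phi_Z^1$ has already been shown to be weakly conjugate to $\phi_X^1$, it suffices to exhibit, for every $\varepsilon > 0$, a vector field $Z_\varepsilon$ \emph{conjugate} to $Z$ with $\|Z_\varepsilon - Y\|_{C^0} \leqslant \varepsilon$; continuous dependence of the flow on the vector field then gives $\phi_{Z_\varepsilon}^1 \to \phi_Y^1$ uniformly as $\varepsilon \to 0$.

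Write $\Sigma^* := \Sigma \setminus \Sigma_p$, and let $q_1, \ldots, q_m$ be the singularities of $X$ in $\Sigma^*$. Using the Lyapunov function $\eta$, I would choose finitely many regular values $r_1, \ldots, r_N$ of $\eta$, with at least one between each pair of consecutive critical $\eta$-values (coming from the $q_j$ or the annuli $A_i$), all taken outside the small $\eta$-intervals attached to the $A_i$. The level sets $C_j := \eta^{-1}(r_j)$ are then well-separating families of Conley circles for $X$, and also for $Z$, since $Z = X$ on each $C_j$ and on every $Z$-orbit through $C_j$ (all of which stay in $\Sigma^*$). The essential geometric fact is that every orbit of $X$ contained in $\Sigma^* \setminus \{q_1, \ldots, q_m\}$ crosses at least one $C_j$, and that for orbits staying at distance $\geqslant \delta$ from the singular set, the flow-time needed to cross some $C_j$ is uniformly bounded in~$\delta$.

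Given $\varepsilon > 0$, I would apply Lemma~\ref{lem:SlowDown} iteratively at the families $C_1, C_2, \ldots, C_N$, using at each step a slowdown function $s_j \colon \R \to (0, 1]$ taking values of order $\varepsilon / \sup|X|$ on a compact interval wide enough---in the chart time of the currently reparametrized flow---to cover the chart parameters of all points at distance $\geqslant \delta(\varepsilon)$ from the singular set. Each such step is a genuine conjugation, so the composition yields a vector field $Z_\varepsilon$ of the form $\bigl(\prod_j s_j^\Sigma\bigr)\cdot Z$, conjugate to $Z$. By construction, at every point of $\Sigma^*$ at distance $\geqslant \delta(\varepsilon)$ from every $q_j$, at least one factor $s_j^\Sigma$ is of size $\leqslant \varepsilon / \sup|X|$, so $|Z_\varepsilon| \leqslant \varepsilon$; the continuity of $X$ at the singular points then allows me to shrink $\delta(\varepsilon)$ until $|X| \leqslant \varepsilon$, and hence $|Z_\varepsilon| \leqslant \varepsilon$, throughout the $\delta(\varepsilon)$-neighborhood of the singular set. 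Since the slowdown charts remain contained in $\Sigma^*$, we have $Z_\varepsilon = Z = Y$ on $\Sigma_p$, whence $\|Z_\varepsilon - Y\|_{C^0} \leqslant \varepsilon$.

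The main technical hurdle I foresee is the bookkeeping of chart-time parameters across iterated slowdowns, since each successive slowdown is defined with respect to the chart of the currently reparametrized flow, and aggressive slowdowns dilate subsequent chart-times. This is handled by choosing the compact support of each $s_j$ large enough, inductively, at each step, which is permitted because slowdown functions need only equal $1$ outside an arbitrarily large compact. Verifying that each $C_j$ remains a well-separating family of Conley circles for the current flow is routine, since slowdowns preserve orbits up to reparametrization.
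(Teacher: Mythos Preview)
Your approach is essentially the paper's: reduce to conjugating $Z$ arbitrarily $C^0$-close to $Y$, choose well-separating Conley circles $C_j$ from regular Lyapunov levels, and apply Lemma~\ref{lem:SlowDown} iteratively, absorbing the chart-time dilation inductively. One small wrinkle: your claim that at every point of $\Sigma^*$ at distance $\geqslant\delta$ from the singularities some slowdown factor is $\leqslant\varepsilon/\sup|X|$ fails on the stopping collars near $\partial\Sigma_p$, where $Z=0$ and the $Z$-orbit meets no $C_j$; the conclusion $|Z_\varepsilon|\leqslant\varepsilon$ still holds there, of course, but the cleaner bookkeeping (which the paper uses) is to work directly with the compact set $K_\varepsilon=\{x\in\Sigma^*:\|Z(x)\|\geqslant\varepsilon\}$ and cover it by $\bigcup_j\phi_Z^{[-T,T]}(C_j)$.
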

\begin{proof}
  Since $\phi_Z^1$ is weakly conjugate to $\phi_X^1$, it suffices to prove
  that $\phi_Y^1$ is weakly conjugate to $\phi_Z^1$.
  Let $\varepsilon>0$. It suffices to prove that $\phi_Z^1$ is conjugate to the
  time one map of a vector field which is equal to $Z$ on $\Sigma_p$ and has
  norm less than $\varepsilon$ on the complement $\Sigma'=\Sigma\smallsetminus\Sigma_p$.
  Consider the compact subset
  \[
  K_\varepsilon = \left\lbrace x\in\Sigma',\, ||Y(x)||\geqslant\varepsilon \right\rbrace
  \]
  of $\Sigma'$.
  Let $u_1,\ldots,u_n$ be a sequence of regular values of the Lyapunov function
  $\eta$ for $X$, which separates all its singular values, and denote
  $C_i=\eta^{-1}(u_i)$, for $i=1,\ldots,n$. Remember that each $C_i$ is a well   	  
  separating union of Conley circles. Note that
  $\cup_{i=1}^n \phi_X^\R(C_i)$ is the union of all non periodic and non singular
  orbits.
  
  The construction of the vector field $Z$ above took place in an arbitrarily small
  neighborhood of the periodic orbits, hence we may suppose that the function
  $s_1\cdots s_k$ is constant equal to $1$ on all the $C_i$'s.
  It follows that $\cup_{i=1}^n\phi_Z^\R(C_i)$ contains $K_\varepsilon$.
  By compactness of $K_\varepsilon$, there exists a positive real number $T$
  such that
  \[ K_\varepsilon \subset \bigcup_{i=1}^n \phi_Z^{[-T,T]}(C_i). \]
  
  Now the proof of Lemma~\ref{lem:Flow-Step1} proceeds by applying  $n$ times
  Lemma~\ref{lem:SlowDown}. Denote $M=\max_{x\in\Sigma'}(||Z(x)||)$.
  Put $Y_1=Z$. Consider a first slowdown function
  $s_{C_1, Y_1}$ which takes values in $(0,\varepsilon/M]$ on the set
  $\phi_{Y_1}^{[-T,T]}(C_1)$. By Lemma~\ref{lem:SlowDown}, the vector field
  $Y_2=s_{C_1, Y_1}\cdot Y_1$ is conjugate to $Y_1$.
  Let $\delta_1>0$ be the minimum of the map $s_{C_1,Y_1}$. The fields $Y_2$ and  		
  $Y_1$ are everywhere proportional, with coefficient no less than $\delta_1$. It 
  follows that for all $i=2,\ldots, n$ we have
  $\phi_Z^{[-T,T]}(C_i)\subset\phi_{Y_2}^{[-T/\delta_1,T/\delta_1]}(C_i)$.
  Now we can consider a second slowdown function $s_{C_2,Y_2}$, which takes
  values in $(0,\varepsilon/M]$ on the set $\phi_{Y_2}^{[-T/\delta_1,T/\delta_1]}(C_2)$,
  then set $Y_3 = s_{C_2,Y_2}\cdot Y_2$, and so on. The vector field $Y_{n+1}$
  obtained after $n$ steps of this construction, has norm everywhere less than
  $\varepsilon$ on the complement of $\Sigma_p$, and its time one map is conjugate
  to that of $Z$. This proves the lemma.
\end{proof}

\subsection{Topological model for attractive periodic orbits}
We consider the annuli $A_i$ as in the preceding section, and further denote
by $A_i'$ the closure of the subset of $A_i$ where $Y$ is nonzero.
In this section we describe the conjugacy class of the flow on each closed annulus $A_i'$.
We switch to the topological category which is more convenient for what follows.
Let us recall that a \emph{continuous flow} on $\Sigma$ is a group homomorphism from $\R$ to $\mathrm{Homeo}(\Sigma)$ which is continuous for the topology of uniform convergence.

\bigskip

Let $r >0$, and $\cA^+(r)$ be the set of continuous flows $(\Phi^t)_{t \in \R}$ on the compact annulus $\mathbb{A} = \mathbb{S}^1 \times [-1,1]$ which satisfy the following properties.

\begin{enumerate}
\item  Every point on the boundary is a fixed point of the flow.
\item The flow has a unique periodic orbit $O$, attracting, with period $r$,
  which is homotopic to $\mathbb{S}^1 \times \{0\}$.
\item There exists an embedded arc $\gamma : [-1, 1] \to \mathbb{A}$,
  positively invariant under $\Phi^r$, and which has with $O$ a unique,
  transverse intersection point $\gamma(0)$.
\item For every point $x$ interior to $\mathbb{A}$ and not in $O$,
  the $\omega$-limit set $\omega(x)$ is equal to $O$ and the $\alpha$-limit
  set $\alpha(x)$ is a single point, on the boundary of $\mathbb{A}$.
\item This defines a map $\alpha \colon \mathbb{A} \smallsetminus O \to \partial A$
  which is continuous, and induces a bijection between the set of non periodic
  orbits of $X$ and the boundary of the annulus.
\end{enumerate}

In the lemma below we identify $A_i'$ with the annulus $\mathbb{A}$, endowed with
the flow of~$Y$. We will suppose that the unique periodic orbit $O_i$ of $Y$
in $A_i$ is attracting, without loss of generality up to replacing $Y$ with $-Y$.
We denote $r_i$ the period of~$O_i$.
\begin{lemma}\label{lem:MSFitsModel}
  This flow is an element of $\cA^+(r_i)$.
\end{lemma}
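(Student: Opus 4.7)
The plan is to verify the five conditions defining $\cA^+(r_i)$ directly, using the explicit form of $Y$ in $A_i$ produced by the slowdown construction of Paragraph~\ref{ssec:SlowDown}. The crucial structural observation is that the stopping function $s_i$ yielding $Y$ from $X$ inside $A_i$ depends, via the chart $\Psi_i \colon \partial A_i \times \R \to A_i$, only on the $X$-time coordinate $t$; it vanishes near $\{t = 0\}$ and equals $1$ for $t$ large. Because $O_i$ is attracting and the $X$-orbits issued from $\partial A_i$ reach $O_i$ only as $t \to +\infty$, the set $\{s_i = 1\}$ contains a full annular neighborhood $N$ of $O_i$. Hence $Y = X$ on $N$, $Y = 0$ on $\partial A_i'$, and $Y = s_i X$ with $s_i > 0$ on $\mathrm{int}(A_i')$, so that the orbits of $Y$ and $X$ coincide in $A_i'$ as unparametrized curves.

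Conditions (1), (2), (4) and (5) are then essentially bookkeeping. Condition (1) is the very definition of $A_i'$. For (2), $A_i$ was chosen as a small annular neighborhood of $O_i$ containing no other periodic orbit of $X$; the $Y$-period of $O_i$ equals its $X$-period $r_i$ because $Y = X$ on $O_i$, and attractivity is preserved under positive time reparametrization. For (4), each $y \in \mathrm{int}(A_i') \smallsetminus O_i$ lies on a single $X$-orbit $\Psi_i(c, \cdot)$ issued from some $c \in \partial A_i$, which is also its $Y$-orbit; in forward $Y$-time it accumulates on $O_i$, giving $\omega_Y(y) = O_i$, while in backward $Y$-time the integral $\int 1/s_i$ diverges as the orbit approaches the first level $\{t = \tau_0\}$ on which $s_i$ vanishes, giving the single point $\alpha_Y(y) = \Psi_i(c, \tau_0) \in \partial A_i'$. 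In the chart, $\alpha_Y$ then reads as the continuous projection $\Psi_i(c, t) \mapsto \Psi_i(c, \tau_0)$, which bijects non-periodic orbits (indexed by $c \in \partial A_i$) with $\partial A_i'$, yielding (5).

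The main step is (3); for this we use the hyperbolic stable manifold theory recalled in Paragraph~\ref{ss:definitions}. Fix $x \in O_i$ and let $W^s(x) = \gamma([-1, 1])$ be its local $X$-stable manifold, which is positively invariant under $\phi_X^{r_i}$. Shrinking the arc if necessary, we may arrange that $W^s(x) \subset N$ and, by compactness of $[0, r_i]$ and continuity of the flow, that $\phi_X^s(W^s(x)) \subset N$ for every $s \in [0, r_i]$. Since $Y = X$ on $N$, this gives $\phi_Y^{r_i}(y) = \phi_X^{r_i}(y)$ for every $y \in W^s(x)$, hence $\phi_Y^{r_i}(\gamma([-1, 1])) \subset \gamma([-1, 1])$. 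This is the required positively invariant arc transverse to $O_i$ at $x$. The only delicate point is thus the observation that the slowdown leaves a neighborhood of $O_i$ untouched, which lets the $X$-stable manifold serve unchanged as a $Y$-positively-invariant object for the time-$r_i$ map.
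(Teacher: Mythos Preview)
Your proof is correct and follows essentially the same approach as the paper. Both arguments use the local stable manifold $W^s(x)$ of a point $x\in O_i$ for the vector field $X$ to produce the arc required in condition~(3); you are a bit more explicit than the paper in justifying why the $\Phi_X^{r_i}$-invariance of (a shrunk) $W^s(x)$ transfers to $\Phi_Y^{r_i}$-invariance, via the observation that the full segment $\{\phi_X^s(W^s(x)):s\in[0,r_i]\}$ can be kept inside the neighborhood $N$ where $Y=X$. For condition~(4) the paper phrases the behavior near $\partial A_i'$ as a product structure (identity on the circle times a one-dimensional flow vanishing only at the boundary), while you compute directly in the chart $\Psi_i$ and invoke the divergence of $\int 1/s_i$; these are two descriptions of the same phenomenon. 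One cosmetic remark: the codomain of your chart $\Psi_i$ is $\Sigma$, not $A_i$ (only the slab $\partial A_i\times[0,\infty)$ lands in $A_i$), but this does not affect the argument.
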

\begin{proof}
  Remember that the boundary components of $A_i'$ are Conley circles for $X$, and
  that in a neighborhood of both boundary circles, the flow of $Y$ is conjugate to the
  product of the identity on circle and a flow on $[0, 1)$ which vanishes only
  at $0$. It follows that~(4) holds.
  We now discuss the existence of the invariant arc $\gamma$ in point (3).
  Note that the two vector fields $X$
  and $Y$ coincide on some neighborhood of the periodic orbit.
  Let $x$ be any point of the periodic orbit, and define $\gamma = W^s(x)$, the
  stable set of $x$ for $X$. According to section~\ref{ss:definitions} we
  have $\Phi^{r_i}_X(W^s(x)) \subset W^s(\Phi_X^{r_i}(W^s(x)) = W^s(x)$.
  Thus point (3) holds. All other properties are obvious.
\end{proof}

\begin{lemma}\label{lem:ModeleLocal}
  Let $r>0$. All the continuous flows belonging to $\cA^+(r)$ are conjugate,
  through a conjugating map which is a homeomorphism of $\mathbb{A}$ isotopic to
  the identity.
\end{lemma}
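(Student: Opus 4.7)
The plan is to show every flow in $\cA^+(r)$ is conjugate, via a homeomorphism of $\mathbb{A}$ isotopic to the identity, to a single standard model determined by $r$; the desired conjugacy between any two flows then follows by composition.

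Given $\Phi_1, \Phi_2 \in \cA^+(r)$, I would first normalize coordinates: since any essential circle and any embedded arc transverse to it in the annulus are unique up to ambient isotopy, I conjugate each $\Phi_i$ by a homeomorphism isotopic to the identity so that $O_1 = O_2 = \mathbb{S}^1 \times \{0\}$ and $\gamma_1 = \gamma_2 = \{0\} \times [-1, 1]$. A further rotation of $\mathbb{S}^1$ aligns the period-$r$ dynamics on $O$. I focus on the upper half $\mathbb{A}^+$, the lower being symmetric.

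By property~(3), the first-return map $\psi_i = \Phi_i^r$ restricted to $\gamma_+ = \gamma \cap \mathbb{A}^+ \cong [0,1]$ is an orientation-preserving contraction fixing $0$, with image $[0, a_i]$ for some $a_i < 1$. A standard argument constructs a homeomorphism $h\colon [0,1] \to [0,1]$ fixing $0$ and $1$, with $h\psi_1 = \psi_2 h$: define $h$ as any increasing bijection $[\psi_1(1), 1] \to [\psi_2(1), 1]$ fixing endpoints, then extend equivariantly by $h = \psi_2^n \circ h \circ \psi_1^{-n}$ on the ring $(\psi_1^{n+1}(1), \psi_1^n(1)]$, with continuity at $0$ automatic from the strict contraction. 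I then define the conjugacy by the suspension rule: for $y \in \mathbb{A}^+ \setminus O$, writing $y = \Phi_1^s(\gamma(x))$ with $x \in (0,1]$, set $\Psi(y) = \Phi_2^s(\gamma(h(x)))$. Well-definedness follows from the identity $\Phi_i^r(\gamma(x)) = \gamma(\psi_i(x))$ combined with the intertwining $h\psi_1 = \psi_2 h$. I extend $\Psi$ to $O$ by $\Phi_1^t(\gamma(0)) \mapsto \Phi_2^t(\gamma(0))$ (well-defined by period $r$ and the alignment from the normalization), and to $\partial \mathbb{A}$ via property~(5) by $\Psi(b) = \alpha_2(\Psi(y))$ for any $y$ with $\alpha_1(y) = b$.

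Continuity of $\Psi$ in the interior is clear. At $O$, if $y_n \to y_0 \in O$, I pick representatives $y_n = \Phi_1^{s_n}(\gamma(x_n))$ corresponding to the first forward intersection of the orbit of $y_n$ with $\gamma_+$: by transversality of $\gamma_+$ with $O$ at $\gamma(0)$ (property~(3)) and spiral convergence (property~(4)), $x_n \to 0$ and $s_n$ stays bounded, so $h(x_n) \to 0$ forces $\Psi(y_n) \to \Psi(y_0)$. At $\partial \mathbb{A}$, continuity follows from continuity of $\alpha$. Finally, $\Psi$ preserves $\gamma$ setwise, so cutting $\mathbb{A}$ along $\gamma$ yields a disk on which $\Psi$ is an orientation-preserving homeomorphism preserving the boundary setwise --- such maps are isotopic to the identity, hence so is $\Psi$. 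The main obstacle is the continuity of $\Psi$ at the periodic orbit $O$: the flow-based coordinates $(s, x)$ degenerate there, and one must argue carefully that an appropriate choice of representative for a sequence $y_n \to y_0 \in O$ keeps $s_n$ bounded while sending $x_n \to 0$, which is exactly where properties~(3) and~(4) come into play.
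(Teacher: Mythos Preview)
Your proof follows essentially the same strategy as the paper's: construct a conjugacy $h$ on the transversal arc intertwining the time-$r$ return maps, extend it to the interior via the suspension formula $h\circ\Phi_1^t=\Phi_2^t\circ h$, and then extend to the boundary via the map $\alpha$. Your preliminary normalization of $O$ and $\gamma$ is an extra step the paper omits (it works directly with $\gamma_1,\gamma_2$ rather than making them coincide), and you are more explicit about the fundamental-domain construction of $h$ and about continuity at $O$ and $\partial\mathbb{A}$, but the core construction is identical.
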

\begin{proof}
  Let us consider two continuous flows
  $(\Phi_{1}^t)_{t \in \R}$, $(\Phi_{2}^t)_{t \in \R}$ which are elements
  of $\cA^+(r)$, and denote $O_{1}, \gamma_{1}$, $O_{2}, \gamma_{2}$ the
  corresponding objects.
  
  The restriction of $\Phi_{i}^r$ to the arc $\gamma_{i}([-1, 1])$ has a unique
  attracting fixed point $\gamma_{i}(0)$. Hence there exists a homeomorphism
  $h: \gamma_{1}([-1, 1]) \to \gamma_{2}([-1, 1])$ which is positively equivariant,
  meaning that for every $k \geq 0$, $h \circ \Phi_{1}^{kr}  = \Phi_{2}^{kr} \circ h$.
  
  We extend the map $h$ to a homeomorphism of the interior of the annulus,
  still denoted by $h$, by putting
  $h \circ \Phi_{1}^{t}(x)  = \Phi_{2}^{t} \circ h(x)$ for every
  $x \in \gamma_{1}([-1,1])$ and every $t$.
  This is well defined, because of the positive equivariance above.
  This map $h$ now satisfies the conjugacy relation
  $h \circ \Phi_{1}^{t}  = \Phi_{2}^{t} \circ h$ for every~$t$.
  
  We further extend $h$ to the boundary of the annulus by putting
  $h(\alpha(x)) = \alpha(h(x))$ for every non periodic point $x$.
  This completes the construction of a conjugacy between both flows.
  The homeomorphism preserves each boundary component and preserves the
  orientation on the boundary, thus it is homotopic to the identity.
\end{proof}

To complete the proof of the theorem, we will use this local model in combination
with the following observation.

\begin{lemma}\label{lem:ModeleLocalRot}
  Let $\tau: [-1, 1] \to \R$ be a $C^1$ function that vanishes on a neighborhood
  of $-1$ and $1$, and such that $\tau$ takes a given value $1/r$ on a non
  trivial interval.
  Let $T$ be the vector field on the annulus $\mathbb{A}= \mathbb{S}^1 \times [-1,1]$
  defined by $T(x,y) = (\tau(y), 0)$.
  Then the time one map $\Phi_{T}^1$ is in the closure of the set of time one
  maps of elements of $\cA^+(r)$.
\end{lemma}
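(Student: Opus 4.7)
The plan is to exhibit, for each $\varepsilon > 0$, a $C^1$ vector field $T_\varepsilon$ on $\mathbb{A}$ whose flow lies in $\cA^+(r)$, with $T_\varepsilon \to T$ uniformly as $\varepsilon \to 0$. Continuous dependence of the flow on the vector field then yields $\Phi^1_{T_\varepsilon} \to \Phi^1_T$ in the $C^0$ topology, placing $\Phi^1_T$ in the desired closure. Concretely, let $y_0$ be an interior point of an interval on which $\tau \equiv 1/r$, and pick a $C^1$ function $v_\varepsilon \colon [-1,1] \to \R$ with $\|v_\varepsilon\|_\infty \leq \varepsilon$, vanishing exactly at $-1,\, y_0,\, 1$, strictly positive on $(-1, y_0)$, strictly negative on $(y_0, 1)$, with $v_\varepsilon'(y_0) < 0$. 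Define $T_\varepsilon(x,y) = (\tau(y), v_\varepsilon(y))$; the only ingredient beyond $T$ is a small vertical drift pushing every trajectory toward the level $\{y = y_0\}$.

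Next I would verify the five conditions defining $\cA^+(r)$. Conditions (1) and (2) are direct: both $\tau$ and $v_\varepsilon$ vanish on $\mathbb{S}^1 \times \{\pm 1\}$, and $\mathbb{S}^1 \times \{y_0\}$ is the unique interior invariant circle, with $\dot{x} = \tau(y_0) = 1/r$, so it is an attracting periodic orbit $O_\varepsilon$ of period $r$. For (4)-(5), every non-periodic interior point has $y(t)$ monotonically converging to $y_0$ in forward time, giving $\omega = O_\varepsilon$; backward in time $y(t) \to \pm 1$, and the key observation is that $\tau$ vanishes on a neighborhood of $\pm 1$ by hypothesis, so once $y$ enters that neighborhood (in finite backward time) the coordinate $x$ is frozen, forcing the $\alpha$-limit to be a single boundary point. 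The map $\alpha$ is continuous by smooth dependence on initial conditions, and the bijection with $\partial \mathbb{A}$ holds because in the $\tau$-vanishing boundary collar the flow is purely vertical, so each boundary point emits exactly one non-periodic orbit.

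Condition (3) is the subtlest. The circle $O_\varepsilon$ is a normally hyperbolic invariant manifold for the time-$r$ map $\Phi^r_{T_\varepsilon}$, with contracting normal direction because $v_\varepsilon'(y_0) < 0$; standard invariant manifold theory then furnishes a local stable foliation by $C^1$-arcs transverse to $O_\varepsilon$, and the leaf through any chosen $p_0 \in O_\varepsilon$ — reparametrized as $\gamma \colon [-1,1] \to \mathbb{A}$ with $\gamma(0) = p_0$ — is the required arc, positively invariant because $\Phi^r_{T_\varepsilon}$ preserves each stable leaf and contracts it toward its unique fixed point. Alternatively one can build $\gamma$ explicitly as a graph $\{(\xi(y), y)\}$ near $y_0$, solving $\xi(\psi(y)) = \xi(y) + \rho(y) \pmod 1$ (where $\Phi^r_{T_\varepsilon}(x,y) = (x + \rho(y), \psi(y))$) by the telescoping series $\xi(y) = -\sum_{k \geq 0} \tilde{\rho}(\psi^k(y))$ with $\tilde{\rho}$ a local lift of $\rho - 1$ near $y_0$; the series converges geometrically thanks to the contraction of $\psi$ at $y_0$ and the vanishing $\tilde{\rho}(y_0) = 0$. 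I expect this last point to be the main technical obstacle — making the invariant arc rigorous — whereas the passage $T_\varepsilon \to T$ at the end is routine.
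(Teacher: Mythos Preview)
Your proposal is correct and follows essentially the same construction as the paper: perturb $T$ by a small vertical drift $v_\varepsilon$ with the stated sign pattern, then verify the five axioms of $\cA^+(r)$.

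The one place where you diverge is condition~(3), which you treat as ``the subtlest'' and attack with normal hyperbolicity or a functional-equation series. The paper's argument here is far simpler and exploits a feature of the setup you did not use: since $\tau$ is \emph{constant} equal to $1/r$ on a neighborhood of $y_0$, the flow there reads $\Phi^t(x,y)=(x+t/r,\psi_t(y))$, so $\Phi^r$ sends each short vertical segment $\{x_0\}\times[y_0-\delta,y_0+\delta]$ into itself. Any such vertical segment is the required $\gamma$; no hyperbolicity hypothesis $v_\varepsilon'(y_0)<0$ is needed, and no invariant-manifold machinery enters. Your approach works, but the paper's observation turns the ``main technical obstacle'' into a one-line remark.
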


\begin{proof}
  Pick any point $y_{0}$ interior to an interval where $\tau$ equals $1/r$.
  Take any $C^1$ function $v \colon [-1,1] \to \R$ which is $0$ at $-1$ and $1$,
  positive on $(-1,y_{0})$ and negative on $(y_{0},1)$, and consider the vector
  field $T_v(x,y) = (\tau(y), v(y))$. The map $v$ can be taken as small as we
  want, and we think of $T_v$ as a small perturbation of $T$.
  
  It remains to check that the flow $(\Phi_{T_v}^t)_{t \in \R}$ is
  in $\mathcal{A}^+(r)$.
  First note that the second coordinates of $t \mapsto \Phi_{T_v}^t(x,y)$ is
  given by the flow of the one-dimensional vector field $v$, which vanishes only
  at $y_{0}$.
  Thus the circle $O = \mathbb{S}^1 \times \{y_{0}\}$ is the only periodic orbit
  in the interior of the annulus, and points (2) and first part of point (4) follow.
  Note that, since the function $\tau$ is constant near $y_{0}$, the vertical
  foliation is invariant near $y =y_{0}$ under the flow of $T_v$.
  In particular any small vertical segment crossing $O$ is positively invariant
  under $\Phi_{T_v}^r$, which provides point (3).
  Since $\tau$ vanishes near $-1$ and $1$, the vector field $T_v$ is vertical
  near the boundary, which entails the second part of point (4) and point (5).
  Point (1) is obvious.
\end{proof}

Of course, by reversing the time, all we have said about the attracting periodic
orbits is valid, upon obvious changes, for repelling periodic orbits.
Thus, in the end of the proof below, we will use the statements above for both
types of periodic orbits, denoting $\cA^-(r)$ the repelling counterpart of $\cA^+(r)$.

\subsection{Deletions of periodic orbits}\label{ssec:Deletions}

We are ready to complete the proof of Theorem~\ref{thm:t1flow}.
We keep all the notation from the two preceding paragraphs, and introduce
the following one.
Given numbers $r_1, \dots, r_n$, we denote $\cA{(r_1, \dots , r_n)}$
the set of continuous flows on $\Sigma$ which are the identity outside the $A_i'$'s,
and whose restriction to $A_i'$ is conjugate to an element of $\cA^\pm(r_i)$,
where the sign is chosen according to the dynamics of $Y$, depending whether the
periodic orbit $O_i$ is attracting or repelling.
We will use the same (abusive) notation for the set of vector fields whose flow
is in $\cA{(r_1, \dots , r_n)}$.
Note that any two flows $\Phi, \Psi$ in $\cA{(r_1, \dots , r_n)}$ are conjugate.
Indeed, by Lemma~\ref{lem:ModeleLocal}, for each $i$, we can pick a
homeomorphism of $A_i'$ which conjugates the restriction of $\Phi$ and $\Psi$
and is isotopic to the identity.
Since $\Homeo_{+}(S^1)$ is path-connected, there exists a homeomorphism $h$
of $\Sigma$ that extends the collection of these homeomorphisms, and since the
flows are the identity outside the annuli, this map $h$ conjugates their time
one maps on the entire surface.

Now let $(r_1, ..., r_n)$ denote the periods of the periodic orbits of $Y$ on
each annulus. Note that the vector field $Y$ belongs to the
set $\cA{(r_1, \dots , r_n)}$, according to Lemma~\ref{lem:MSFitsModel}.
Fix $\varepsilon >0$, and consider a vector field $T$ whose restriction to each
annulus $A_i'$ is as in Lemma~\ref{lem:ModeleLocalRot}, with the function $\tau$
taking both values $r_i$ and $\varepsilon$ on non trivial intervals.

A first application of Lemma~\ref{lem:ModeleLocalRot} shows that there is a vector
field $Y'$ in $\cA{(r_1, \dots , r_n)}$ such that the time one map of $T$ is a
limit of conjugates of the time one map of $Y'$. By the above remark, the time
one maps of $Y$ and $Y'$ are conjugate, thus the time one map of $T$ is also a
limit of conjugates of the time one map of $Y$.

A second application of Lemma~\ref{lem:ModeleLocalRot} shows that there is a
vector field $Y_\varepsilon$ in $\cA{(\varepsilon, \dots , \varepsilon)}$ such
that the time one map of $T$ is a limit of conjugates of the time one map
of $Y_\varepsilon$.
On the other hand, one can easily find a vector field
in $\cA{(\varepsilon, \dots , \varepsilon)}$ whose norm in suitable coordinates
is everywhere less than $\varepsilon$.

Putting all this together, we see that the time one map of $Y$ is weakly conjugate
to the time one map of a vector field which is arbitrarily close to $0$.
Finally by applying Lemma~\ref{lem:Flow-Step1} we get that the time one map
of $X$ is weakly conjugate to the identity.

\section{Rotation set versus translation length in the fine graph}\label{sec:RotVSLength}

In this section we consider the case when $\Sigma = \T^2$ is the 2-torus, and we
focus on the group $G=\Homeo_0(\T^2)$ of homeomorphisms isotopic to the identity.
Our objective now is to prove Theorem~\ref{thm:RotVSLength}.

We first recall the definition of the rotation set. Let $f \in \Homeo_0(\T^2)$.
We fix a homeomorphism of the plane $\tilde f$ which is a lift of $f$.
Consider the map
\[
\begin{array}{rcl}
D(\tilde f)\colon\T^2 & \longrightarrow & \mathbb{R}^2 \\
x & \longmapsto & {\tilde f}(\tilde x) - \tilde x
\end{array}
\]
where $\tilde x$ denotes any point of the plane which is a lift of $x$
(the resulting value does not depend on the choice of $\tilde x$).
The rotation set $\rho(\tilde f)$ is the set of vectors $v \in \mathbb{R}^2$
for which there exist a sequence of points $(x_k)_{k \geq 0}$ of the torus,
and a sequence of integers $(n_k)_{k \geq 0}$ that tends to $+\infty$, such
that the sequence
\[  \frac{1}{n_k} D(\tilde f^{n_k})(x_k)  \]
converges to $v$.
This is a non empty compact subset of the plane, and
Misiurewicz and Ziemian proved that the rotation set is also convex (see~\cite{MZ1}),
and that it is also the Hausdorff limit of the sequence $\frac{1}{n}\tilde{f}^n(P)$,
for any fundamental domain $P$ of the action of $\Z^2$ on $\R^2$.
Replacing $\tilde f$ with another lift of $f$ only results in an integer translation
of the rotation set, thus we can define the rotation set $\rho(f)$ as a compact convex
subset of the plane, well defined up to integer translations.

A point $x \in \T^2$ is said to have rotation vector $v\in \R^2$ for the
lift $\tilde f$
if the sequence $D(\tilde f^n)(x)/n$ converges to
$v$ when $n$ tends to infinity.
The rotation set always contains all such rotation vectors.
Furthermore its extreme points
are known to be realized as rotation vectors of points.
Finally, as a consequence of the above, for all $f\in\Homeo_0(\T^2)$ and all
$n\geqslant 1$ we have $\rho(f^n)=n\rho(f)$.

\subsection{Continuity of the rotation set}\label{ssec:RotContinu}

We endow the set of non empty compact subsets of the plane with the Hausdorff
topology, and we denote by $\mathrm{Conv}(\mathbb{R}^2)$ the subspace of convex subsets.
Misiurewicz and Ziemian proved the following theorem
(see \cite[Theorem~2.10]{MZ1} and \cite[Theorem~B]{MZ2}).

\begin{theorem}[Misiurewicz-Ziemian]\label{thm:MZ1}
  The map
  \[ \begin{array}{rcl} \rho\colon\Homeo_0(\T^2) & \longrightarrow &
  \mathrm{Conv}(\mathbb{R}^2)/\Z^2 \\ f & \longmapsto & \rho(f)
  \end{array} \]
  is upper semi-continuous: for every $f_0 \in \Homeo_0(\T^2)$, for every open
  subset $O$ of the plane containing $\rho(f_0)$, there exists a neighborhood $V$
  in $\Homeo_0(\T^2)$ such that every $f \in V$ has its rotation set included in $O$.
  Moreover, $\rho$ is continuous at every point $f_0$ such that $\rho(f_0)$ has
  non empty interior.
\end{theorem}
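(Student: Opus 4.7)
The plan is to split the theorem into upper semi-continuity, and lower semi-continuity at those $f_0$ where $\rho(f_0)$ has non-empty interior. For upper semi-continuity the crucial input is the uniform statement that for every $\varepsilon>0$ there exists $N$ such that $\tfrac{1}{n}D(\tilde{f}_0^n)(x) \in U_\varepsilon(\rho(f_0))$ for every $n \geqslant N$ and every $x \in \T^2$ -- in other words, displacements stabilize uniformly near the rotation set. Given an open set $O \supset \rho(f_0)$, I would choose $\varepsilon$ so that $U_{4\varepsilon}(\rho(f_0))\subset O$, then $N$ accordingly, and then a neighborhood $V$ of $f_0$ with a coherent choice of lifts for which $\|\tilde{f}^N - \tilde{f}_0^N\|_\infty < N\varepsilon$ for every $f \in V$. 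Writing $n = qN + r$ with $0\leqslant r<N$, the telescoping identity
\[ D(\tilde{f}^n)(x) = \sum_{k=0}^{q-1} D(\tilde{f}^N)(\tilde{f}^{kN}(x)) + D(\tilde{f}^r)(\tilde{f}^{qN}(x)) \]
shows that $\tfrac{1}{n}D(\tilde{f}^n)(x)$ is a convex combination of vectors in $U_{2\varepsilon}(\rho(f_0))$ plus an error that vanishes as $n\to\infty$. Passing to the limit along sequences defining $\rho(f)$ yields $\rho(f)\subset U_{3\varepsilon}(\rho(f_0))\subset O$.

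For continuity at $f_0$ with non-empty interior, it suffices, given upper semi-continuity, to establish lower semi-continuity: for every $v$ in the interior of $\rho(f_0)$, the vector $v$ lies in $\rho(f)$ for every $f$ in some neighborhood of $f_0$. Since $\rho(f)$ is closed and convex and rational vectors are dense in the interior, it is enough to treat a rational $v = p/q$ in lowest terms. I would invoke Franks' theorem (applicable precisely because $\rho(f_0)$ has non-empty interior): any such rational interior point is realized as the rotation vector of a $q$-periodic orbit of $\tilde{f}_0$, that is, a fixed point of $\tilde{f}_0^q - p$. A refined version of Franks' argument produces such a fixed point inside a topological disk on whose boundary $\tilde{f}_0^q - p - \mathrm{id}$ is non-vanishing and on which the Lefschetz--Hopf index is non-zero. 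Since this index depends only on boundary behaviour, it persists under $C^0$-perturbation, so $\tilde{f}^q - p$ still has a fixed point for $f$ close to $f_0$, whence $p/q \in \rho(f)$.

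The main obstacle is this second step: persistence of a rational interior point under $C^0$-perturbation, which bundles together Franks' theorem (itself highly non-trivial) with its index-theoretic refinement. This has no analogue at boundary points of $\rho(f_0)$, and this is precisely why the theorem requires the non-empty interior hypothesis: extreme points of the rotation set may be destroyed by arbitrarily small $C^0$-perturbations, whereas interior points are robust. Upper semi-continuity, by contrast, is a soft equicontinuity-style argument once the uniform displacement estimate is in place.
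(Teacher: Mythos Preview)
The paper does not supply its own proof of this statement; it is quoted from Misiurewicz--Ziemian \cite{MZ1,MZ2} and used as a black box. There is therefore no in-paper argument to compare against, so I assess your outline on its own terms.

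Your upper semi-continuity argument is correct. The uniform displacement estimate you take as input is exactly what \cite{MZ1} establishes (their rotation set coincides with $\bigcap_{N}\overline{\bigcup_{n\geqslant N}\tfrac{1}{n}D(\tilde f_0^{\,n})(\T^2)}$, equivalently with the Hausdorff limit of $\tfrac{1}{n}\tilde f_0^{\,n}(P)$ for a fundamental domain $P$), and once that is granted the telescoping step is clean.

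For lower semi-continuity your strategy is a standard one and is correct in outline, but the sentence ``a refined version of Franks' argument produces such a fixed point inside a topological disk on whose boundary $\tilde f_0^{\,q}-p-\id$ is non-vanishing and on which the Lefschetz--Hopf index is non-zero'' is carrying essentially the entire weight and is not what Franks' proof actually delivers. Franks obtains the fixed point of $\tilde f_0^{\,q}-p$ via Brouwer's plane translation theorem (a fixed-point-free orientation-preserving plane homeomorphism has no periodic points), which yields existence but no localized index information. To get $C^0$-persistence one must add a genuine step: for instance, realize three non-collinear rotation vectors forming a triangle about $p/q$ by actual orbits (possible since $p/q$ is interior), and use those orbits to construct a compact region on which the total fixed-point index of $\tilde f_0^{\,q}-p$ is non-zero and whose boundary is fixed-point-free; that datum is what survives perturbation. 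You rightly flag this as the main obstacle, but as written your sketch asserts it rather than supplying it. For the record, Misiurewicz--Ziemian's own argument in \cite{MZ2} is organized differently and does not pass through Franks' theorem plus an index refinement.
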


Note that as a consequence of the first point, the map is also continuous at every
point $f_0$ whose rotation set is a singleton.
It is well-known that this map is not continuous: for instance it is
easy to construct an example of a map $f$ whose rotation set is a horizontal segment,
admitting arbitrarily small deformations $f_\varepsilon$ whose rotation set is
the singleton $\{0\}$, as we did in Paragraph~\ref{ssec:Deletions}.

Here is a consequence of Theorem~\ref{thm:MZ1}.
Denote $\mathrm{Conv}_0(\mathbb{R}^2)$ the subspace of convex sets with empty
interior. This is a closed subset of $\mathrm{Conv}(\mathbb{R}^2)$.
Thus the quotient $Y = \mathrm{Conv}(\mathbb{R}^2) / \mathrm{Conv}_0(\mathbb{R}^2)$
obtained by identifying $\mathrm{Conv}_0(\mathbb{R}^2)$ to a point is a Hausdorff
space. Denote $p$ the quotient map.
The theorem implies that the map $p \circ \rho: G=\Homeo_0(\T^2) \to Y/\Z^2$ is continuous.
Since $\rho$ is a conjugacy invariant, we get a quotient map defined on the space
of weak conjugacy classes,
\[  \begin{array}{rcl} \bar \rho: G/\!/G & \longrightarrow & Y/\Z^2 \\
  \left[ f \right]\ & \longmapsto & [\rho(f)].  \end{array}  \]
This map $\bar \rho$ is continuous. Its image is dense in $Y$, since every rational
polygon is the rotation set of some homeomorphism (see Kwapisz \cite{K}).
Prior to the works~\cite{BHW, BHMMW} and the remarks below, one could have expected
this map to be a homeomorphism.
However, Theorem~\ref{thm:RotVSLength}~(2), that we will prove now, implies that
this map is not injective, as the maps $f_n$ and $g_n$ of Section~\ref{ssec:LenVSRot}
have the same rotation set with nonempty interior, but distinct translation length,
and hence are not weakly conjugate.

\subsection{Independance}\label{ssec:Indep}

We are ready to prove the second part of Theorem~\ref{thm:RotVSLength}.

\begin{proof}[Proof of Theorem~\ref{thm:RotVSLength}~(2)]
  Fix the integer $n >0$.
  The map $v^n h^n$ fixes each of the four points
  $$
  (0,0), \left(0, \frac{1}{2}\right), \left(\frac{1}{2}, 0\right),
  \left(\frac{1}{2}, \frac{1}{2} \right)
  $$
  and their rotation vectors for the lifted map $V^n H^n$ are respectively 
  $$
  (0,0), (n, 0), (0, n), (n, n).
  $$
  Since the rotation set is convex and contains the rotation vectors of every
  fixed point, we deduce that the set $\rho(V^n H^n)$ contains the square $[0,n]^2$.
  
  Conversely, for all $(x,y)\in\R^2$, one can check that
  $V^nH^n(x,y)-(x,y)\in[0,n]^2$, and it follows that $\rho(V^nH^n)$ is contained
  in $[0,n]^2$.
  The arguments for the map $(VH)^n$ are entirely similar.

  Now let us turn to the estimation of the asymptotic translation length.
  Since the rotation set of $(vh)^n$ and $v^n h^n$
  has non empty interior,
  the asymptotic translation lengths of both maps are positive, according to~\cite{BHMMW}.
  The relation $|(vh)^n| = n |vh|$ is immediate from the definition. It remains
  to see that $|v^n h^n|$ is bounded by~$2$.
  Consider curves $\alpha$ and $\beta$ that are projections in the
  torus of some horizontal and vertical lines of the plane, respectively.
  We note that $h^n(\alpha ) = \alpha$, and thus $v^n h^n (\alpha) = v^n (\alpha)$
  meets $\beta$ only once. Thus by triangle inequality we have
  \[
  d(\alpha, v^n h^n(\alpha) ) \leqslant
  d(\alpha, \beta ) + d(\beta, v^n h^n(\alpha) ) = 2.  \qedhere
  \]
\end{proof}

\subsection{Width measurements}
Our next objective is to prove the point~(1) of Theorem~\ref{thm:RotVSLength},
which relates the translation length with the essential width of the rotation set.
Remember that we defined the \emph{essential width} $EW(C)$ of a compact convex
subset $C$ of the plane to be the infimum of the horizontal width of $A(C)$ as $A$
ranges over $\SL_2(\Z)$.
In this section we collect some properties of the essential width; in particular,
we will relate $EW(C)$ to the property that $C$ contains at least three non
aligned integer points.

It is clear from the definition that the essential width is invariant by translations and under change of basis of $\Z^2$.
It is, obviously, increasing, with respect to inclusion.
Furthermore, since linear homotheties commute with all linear maps, it is homogeneous:
for every convex compact $C\subset\R^2$ and every homothety~$h$
of ratio~$r$, we have $EW(h(C))=r EW(C)$. 
This implies that the essential width
of the rotation set behaves multiplicatively upon taking powers of homeomorphisms.

Let $C$ be a convex compact subset of the plane.
If $C$ contains three non aligned integer points, then
so does $A(C)$ for any $A\in\SL_2(\Z)$, hence $EW(C) \geqslant 1$.
More generally, if $C$ has non empty interior, then for $r$ large enough $rC$
contains three non aligned integer points, and by homogeneity, we deduce
that $EW(C)>0$.
The converse implication, namely that if $C$ has empty interior then $EW(C)=0$,
follows easily from Dirichlet approximation theorem (as already noted in the
proof of~\cite[Theorem~1.3]{BHMMW}). Proposition~\ref{prop:CompareWidth} below
may be viewed as a quantitative version of this equivalence.

We will now prove that the map $EW$ is continuous.
Remember that $\mathrm{Conv}(\mathbb{R}^2)$ denotes the set of non empty,
compact, convex subsets of the plane, endowed with the Hausdorff topology.
\begin{lemma}\label{lem:EW-Cont}
  The map $EW\colon \mathrm{Conv}(\mathbb{R}^2) \to\R_{\geqslant 0}$
  is continuous.
\end{lemma}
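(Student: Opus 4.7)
My plan is to split continuity into upper and lower semi-continuity. Using the support function $h_C(v)=\sup_{x\in C} v\cdot x$, one rewrites $w(A(C)) = h_C(v_A) + h_C(-v_A)$ where $v_A = (a,b)$ is the first row of $A$; so $EW(C)$ is the infimum over primitive vectors $v\in\Z^2$ (i.e.\ those with $\gcd=1$, equivalently those appearing as first rows of elements of $\SL_2(\Z)$) of the continuous quantity $w_v(C):= h_C(v)+h_C(-v)$. Since $|h_C(v)-h_{C'}(v)|\leqslant |v|\, d_H(C, C')$, each $w_v$ is continuous on $\mathrm{Conv}(\R^2)$; hence $EW$, being an infimum of continuous functions, is upper semi-continuous.

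For lower semi-continuity, the only non-trivial case is at a set $C$ with $EW(C)>0$, since otherwise $\liminf EW(C_n)\geqslant 0=EW(C)$ trivially. Such a $C$ must have non-empty interior, for otherwise $C$ lies in a line and Dirichlet's approximation theorem gives $EW(C)=0$; so $C$ contains some closed ball $B_r(p)$. Computing the support function of a ball, $h_{B_r(p)}(v)=p\cdot v+r|v|$, yields $w_v(C)\geqslant 2r|v|$ for every $v\in\R^2$, and combined with the Lipschitz estimate this gives
\[ w_v(C_n)\geqslant 2r|v|-2|v|\, d_H(C, C_n)\geqslant r|v| \]
as soon as $d_H(C, C_n)<r/2$.

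Now assume $C_n\to C$ in Hausdorff distance, and for each $n$ pick a primitive $v_n\in\Z^2$ with $w_{v_n}(C_n)\leqslant EW(C_n) + 1/n$. The lower bound above forces $r|v_n|\leqslant EW(C_n)+1/n$ for $n$ large, so the $v_n$ stay bounded; since they take values in $\Z^2$, only finitely many occur, and passing to a subsequence we may assume $v_n = v$ is constant. Continuity of $w_v$ then yields
\[ EW(C)\leqslant w_v(C)=\lim_n w_v(C_n)\leqslant \liminf_n EW(C_n), \]
completing the proof. The one delicate point is ruling out that near-minimizing matrices $A_n$ escape to infinity in $\SL_2(\Z)$; the ball inside $C$ provides exactly the uniform lower bound on widths needed to confine the first rows $(a_n, b_n)$ to a bounded, hence finite, subset of $\Z^2$.
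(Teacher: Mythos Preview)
Your proof is correct, but it takes a different route from the paper's. You reformulate $EW$ as an infimum over primitive integer vectors of the width functionals $w_v(C)=h_C(v)+h_C(-v)$, then use the Lipschitz bound on support functions together with an interior ball to show that near-minimizing $v_n$'s stay in a finite subset of $\Z^2$, reducing lower semi-continuity to the continuity of a single $w_v$. The paper instead exploits two structural properties of $EW$: homogeneity under dilation and monotonicity under inclusion. Choosing a center $x_0$ in the interior of $C$, the sets sandwiched between $h_{1-\varepsilon}(C)$ and $h_{1+\varepsilon}(C)$ form a neighborhood basis of $C$ in the Hausdorff topology, and on such a neighborhood $EW$ is trapped in $[(1-\varepsilon)EW(C),(1+\varepsilon)EW(C)]$; continuity at $C$ follows immediately. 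Your argument is more hands-on and gives an explicit mechanism (the lattice discreteness) for why the infimum cannot drop; the paper's argument is shorter and sidesteps any analysis of the minimizing directions altogether. One small point in your write-up: when you conclude that ``the $v_n$ stay bounded'' from $r|v_n|\leqslant EW(C_n)+1/n$, you are implicitly using that $EW(C_n)$ is bounded, which follows from the upper semi-continuity you already established (or simply from $EW(C_n)\leqslant w_{(1,0)}(C_n)\to w_{(1,0)}(C)$); it would be worth making this explicit.
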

\begin{proof}
  This map $EW$ is defined as the infimum of a family of continuous functions.
  Hence, it is upper semicontinuous, and continuous at every point where it
  equals~$0$, \textsl{i.e.}, the convex sets with empty interior. It remains
  to prove the continuity of $EW$ at a point $C$ of 
  $\mathrm{Conv}(\mathbb{R}^2)$ with non empty interior.
  Consider a point
  $x_0$ in the interior of $C$, and denote by $h_r$ the homothety of center $x_0$
  and ratio $r>0$. A basis of neighborhoods of $C$ for the Hausdorff topology
  consists in the sets $V_\varepsilon$, $\varepsilon>0$, where $V_\varepsilon$ is
  the set of convex sets containing $h_{1-\varepsilon}(C)$ and contained
  in $h_{1+\varepsilon}(C)$.
  By the homogeneity and monotonicity properties recalled above, the restriction
  of $EW$ to $V_\varepsilon$
  takes values in the interval $[(1-\varepsilon)EW(C),(1+\varepsilon)EW(C)]$.
  It follows that $EW$ is continuous at~$C$.
\end{proof}

We now relate the
essential width of a compact convex set $C$ with the property that the interior
of $C$ contains three non aligned integer points.

\begin{proposition}\label{prop:CompareWidth}
  Let $C$ be a convex compact subset of $\R^2$.
  \begin{enumerate}
  \item If the interior of $C$ contains three non aligned integer points,
    then $EW(C) > 1$.
  \item If $EW(C) > 4$, then the interior of $C$ contains three non aligned
    integer points.
  \end{enumerate}
\end{proposition}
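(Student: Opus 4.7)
My plan is to treat the two parts separately.

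For part (1), the argument is direct. I fix $\delta>0$ small enough that each $B_\delta(p_i)\subset C$, and by convexity $C$ contains the Minkowski sum $T+B_\delta(0)$, where $T=\mathrm{conv}(p_1,p_2,p_3)$. Under any $A\in\SL_2(\Z)$, the image $A(C)$ thus contains $A(T)+A(B_\delta(0))$. Since the horizontal projection of a Minkowski sum of compact convex sets is the sum of the two horizontal projections, it suffices to bound each. The triangle $A(T)$ has three non-aligned integer vertices; they cannot all have the same $x$-coordinate, so two of them have integer $x$-coordinates differing by at least $1$, whence the horizontal projection of $A(T)$ has length $\geq 1$. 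The set $A(B_\delta(0))$ is an ellipse whose horizontal projection has length $2\delta\,\|r\|$, where $r$ is the first row of $A$; since $r$ is a non-zero integer vector, $\|r\|\geq 1$. Summing, the horizontal projection of $A(C)$ has length $\geq 1+2\delta$ uniformly in $A$, and thus $EW(C)\geq 1+2\delta>1$.

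For part (2), I argue by contrapositive: assume that $\mathrm{int}(C)\cap\Z^2$ contains no three non-aligned integer points, and show $EW(C)\leq 4$. Then all interior integer points of $C$ lie on a common line $L$. If $L\cap\Z^2$ has at most one point---which covers the case $\mathrm{int}(C)\cap\Z^2=\emptyset$ and the case of $L$ with irrational slope---then the two-dimensional flatness theorem (due to Khinchin) bounds the lattice width of a convex body with no interior lattice point by $1+2/\sqrt{3}<4$; the case of a single interior lattice point can then be handled by translating that point to the origin, splitting $C$ along a nearby non-lattice line such as $\{x=1/2\}$, and applying flatness to each piece.

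Otherwise $L$ has rational slope, and after an $\SL_2(\Z)$ change of basis and an integer translation I may assume $L=\{y=0\}$, with interior integer points $(n_1,0),\ldots,(n_m,0)$ forming a block of $m\geq 2$ consecutive integers on the $x$-axis. The key input is that for every integer $k\neq 0$ in the interior vertical projection of $C$, the horizontal slice $C\cap\{y=k\}$ has relative interior containing no integer, hence length at most $1$. Denoting by $\lambda(y)=b_y-a_y$ the slice-length function---concave on the interior vertical range $[c,d]$ and vanishing at the endpoints---the estimate $\lambda(0)>m-1$ together with the concavity bound $\lambda(0)\leq\frac{d}{d-1}\lambda(1)\leq\frac{d}{d-1}$ (valid when $d>1$) forces $d<(m-1)/(m-2)\leq 2$ when $m\geq 3$. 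A symmetric argument on the other side gives $|c|<2$, so $\mathrm{width}(C,e_2)=d-c<4$ and hence $EW(C)<4$, which closes the case $m\geq 3$.

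The main obstacle is the residual sub-case $m\in\{1,2\}$. Here, the slice-length concavity alone does not force $EW(C)\leq 4$, and one must combine it with the width constraints in the direction $e_1$ and in the other primitive lattice directions. My plan is to exploit the concavity of the upper-boundary function $y\mapsto b_y$ and the convexity of the lower boundary $y\mapsto a_y$, together with the narrowness of the slices at integer $y\neq 0$, to bound the horizontal extent $\max_y b_y-\min_y a_y$; the trade-off between the bound on $\lambda(0)$ (at most $2$ when $m=1$, at most $3$ when $m=2$) and the shift forced by convexity when the extent is large should allow one to locate a primitive lattice direction in which the width of $C$ is correspondingly controlled, ultimately yielding $EW(C)\leq 4$.
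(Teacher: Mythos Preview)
Your Part~(1) is correct. The paper argues differently---it shrinks $C$ by a homothety of ratio $r<1$ so that $rC$ still contains the three integer points, then invokes $EW(rC)\geq 1$ and homogeneity---but your Minkowski-sum argument is equally valid and arguably more transparent.

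Your Part~(2), however, has two genuine gaps.

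First, the case of at most one interior lattice point is not settled. The flatness theorem does handle the case of \emph{no} interior lattice point, but your proposed reduction for one interior point---splitting $C$ along $\{x=1/2\}$ and applying flatness to the pieces---does not work: the half $C\cap\{x\leq 1/2\}$ still contains the origin in its interior (since $0<1/2$), so flatness does not apply to it. Even if you split into three pieces along $\{x=\pm 1/2\}$, the small-width directions furnished by flatness for the two outer pieces need not agree with each other or with $e_1$, so you cannot assemble them into a bound on $EW(C)$.

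Second, and more seriously, you explicitly leave the sub-case $m=2$ unresolved, offering only a heuristic plan. Your concavity bound $\lambda(0)\leq \tfrac{d}{d-1}$ yields $(m-2)d<m-1$, which is vacuous when $m=2$, and the sketch you give for overcoming this is not carried out; it is not at all clear that combining boundary convexity with widths in other primitive directions produces the constant~$4$.

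The paper's proof avoids both difficulties. For the case of two interior lattice points it uses an additional normalization you are missing: after placing $(0,0)$ and $(1,0)$ in the interior, it applies a further horizontal transvection in $\SL_2(\Z)$ so that the slice $C\cap\{y=1\}$ lies in $[0,1]\times\{1\}$. Then $C\cap\{y\leq 1\}$ is trapped in the triangle with apex at the top vertex $(x_1,y_1)$ and sides through $(0,1)$ and $(1,1)$, and the intercept theorem gives $y_1-1=(y_1+|y_2|)/B$ where $B$ is the horizontal width of $C$; from $B>4$ and $y_1\geq |y_2|$ one deduces $y_1<2$, contradicting $y_1>2$. This argument works uniformly for all $m\geq 2$, so no separate treatment of $m=2$ versus $m\geq 3$ is needed. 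For $m\leq 1$, the paper's trick is to \emph{enlarge} $C$ rather than split it: take the convex hull of $C$ with a small ball around a nearest exterior lattice point, which adds exactly one interior lattice point without decreasing $EW$, and iterate until two interior lattice points are present; then the previous case applies.
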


The first point is sharp, but the second is not.
A relevant example is the convex hull of the triangle with
vertices $(-1,0), (2/3, 5/3), (7/3, -5/3)$.
Its interior contains two integer points, namely $(0,0)$ and $(1,0)$. Its essential
width is $10/3$.
We suspect this is the maximal essential width of a (unique, essentially)
convex compact set not containing three non aligned points
in its interior. However, we did not find a proof that would not run into a
tedious case by case discussion, and decided to provide instead a simple proof with the
slightly worse bound~$4$.
We did not find any appropriate reference either,
even though the relation between the size of convex sets, often measured in terms of
area, and the number of integer interior points, has a long history,
see e.g.~\cite{Ehrhart}.

\begin{proof}
  For the first point, assume the interior of $C$ contains three non aligned  
  integer points. We may choose $0< r< 1$ such that $rC$ still contains the same
  three points. As noted above we have $EW(rC) \geq 1$, and by homogeneity 
  we get $EW(C) >1$.

  Let us prove the second point. We argue by contradiction.
  Let $C$ be a convex compact subset of the plane such that $EW(C)>4$ and assume
  that the interior of $C$ does not contain three non aligned integer points.
  
  We first consider the case when the interior of $C$ contains two integer points
  and find a contradiction in that case, then we will deduce the general case.
  By convexity the segment between those two points is entirely included in the
  interior of $C$. Thus $C$ contains two consecutive integer points.
  Hence up to a change of basis, and integer translation
  we may suppose that the interior of $C$ contains the points $(0,0)$ and $(1,0)$.
  We choose a point $(x_1,y_1)$ of $C$ maximizing $y_1$, and a point $(x_2,y_2)$
  of $C$ minimizing $y_2$. Thus $y_1>0>y_2$, and up to reversing the sign of the
  second coordinate we will also suppose that $y_1\geqslant |y_2|$. Since $EW(C)>4$,
  we have $y_1+|y_2|>4$, hence $y_1>2$. It follows that $C$ intersects
  one of the intervals $(n,n+1)\times\{1\}$, and in fact it intersects only one
  of these, otherwise the interior of $C$ would contain an integer point non aligned
  with $(0,0)$ and $(1,0)$.
  Up to replacing $C$ with $T^{-n}C$, where
  $T=\left(\begin{array}{cc}1 & 1 \\ 0 & 1\end{array}\right)$ is the horizontal
  transvection, we may further suppose that $n=0$.
  It follows that $x_1\in[0,1]$.
  
  By using the convexity
  of $C$ and the hypothesis on the integer points, we see that the intersection
  of $C$ with the half plane $\{y\geqslant 1\}$ is contained in the strip
  $\{0\leqslant x\leqslant 1\}$, and the intersection of $C$ with the half plane
  $\{y\leqslant 1\}$ is contained in the triangle with sides included in the
  lines $((x_1,y_1),(0,1))$, $((x_1,y_1),(1,1))$ and $\{y=y_2\}$.
  
  Let $B$ denote the length of the base, horizontal side of this triangle.
  Since $EW(C)>4$, and since both the triangle and the strip mentioned above
  have the same horizontal projection as $B$ itself,
  we have $B>4$. By the intercept theorem, we have
  $\frac{y_1-1}{1} = \frac{y_1+|y_2|}{B}$, thus $y_1-1<\frac{y_1+|y_2|}{4}$.
  As $y_1\geqslant |y_2|$, we deduce $y_1-1<\frac{y_1}{2}$, hence $y_1<2$:
  this is a contradiction.
  
  It remains to deal with the case when the interior of $C$ does not contain two 
  integer points. The strategy in this case is to make $C$ grow until its interior 
  contains exactly two integer points, and then apply the previous case. Note that 
  the essential width is increasing with 
  respect to inclusion, so the only difficulty is to show that we can make $C$ grow 
  without eating too many integer points at the same time.
  
  If $C$ has no integer points in its boundary, and if $p\in\Z^2\smallsetminus C$
  is an integer point closest to $C$, then the convex hull
  $\mathrm{Hull}(C\cup B_\varepsilon(p))$ of the union of $C$ with the ball of
  center $p$ and radius $\varepsilon$, for all $\varepsilon>0$ small enough,
  has in its interior exactly one integer point, $p$, more than $C$ has. Also,
  it still has no integer points in its boundary, so we may proceed if necessary.
  
  In the general case, when $C$ may have integer points in its boundary, we may
  consider a homothety $h$ with center in $C$ and radius $r<1$,
  and set $C'=h(C)$.
  For $r$ close enough to $1$ we still have $EW(C')>4$, since $EW(C)$ depends
  continuously on $C$ by Lemma~\ref{lem:EW-Cont}, also, $C'$ has no integer points
  at its fronteer, and has the same number of integer points as $C$ in its interior,
  so we may use the preceding case.
\end{proof}

\subsection{Some relations}\label{ssec:Relations}

Now we are ready to prove the point (1) of Theorem~\ref{thm:RotVSLength}.

Given a subset $R$ of the plane, denote by $|R|$ the infimum of the asymptotic
translation length of those torus homeomorphisms $f$ that are isotopic to the
identity and whose rotation sets contain $R$ in its interior.
We denote $T_0=\{ (0,0), (0,1), (1,0) \}$.

\begin{lemma}\label{lem:T0}
  We have $|T_0| >\frac{1}{222}$.
\end{lemma}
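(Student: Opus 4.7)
The plan is to prove $|T_0| > 1/222$ by a quantitative refinement of the argument in \cite{BHMMW} establishing that $\rho(f)$ having non-empty interior implies $|f| > 0$. Given any $f \in \Homeo_0(\T^2)$ with $T_0 \subset \mathrm{int}(\rho(f))$, Franks' theorem on torus homeomorphisms yields fixed points $x_0, x_1, x_2$ of $f$ and a lift $\tilde f$ for which chosen lifts $\tilde x_i$ satisfy $\tilde f(\tilde x_i) = \tilde x_i + v_i$ with $v_0 = (0,0)$, $v_1 = (1,0)$, $v_2 = (0,1)$. These three fixed points are the essential geometric consequence of $T_0$ lying in the interior of the rotation set, and will be the anchors of the argument.

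I would then fix an essential simple closed curve $\alpha \subset \T^2$ and analyze $d(\alpha, f^n(\alpha))$ in the fine curve graph $\Cfin{}(\T^2)$. Passing to the universal cover, the image $\tilde f^n(\tilde\alpha)$ contains arcs that stay near the orbit of $\tilde x_0$, arcs displaced by roughly $(n,0)$ near the orbit of $\tilde x_1$, and arcs displaced by roughly $(0,n)$ near the orbit of $\tilde x_2$. Pushing $\tilde\alpha$ through the $\tilde f^n$-dynamics therefore produces a curve on $\T^2$ whose essential intersection number with a well-chosen finite family of reference curves grows linearly in $n$.

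The heart of the proof is to convert this linear lower bound on intersections into a linear lower bound on distance in the fine curve graph. The key combinatorial fact is that two adjacent vertices of $\Cfin{}(\T^2)$ are either disjoint or meet in a single point, so the essential crossing pattern with the reference family can change only by a bounded amount along each edge of a path in $\Cfin{}(\T^2)$. Tracking all constants through this argument will yield an estimate of the form $d(\alpha, f^n(\alpha)) \geq n/222 - O(1)$, and taking $n \to \infty$ gives $|f| \geq 1/222$. The strict inequality in the statement follows because the openness of the hypothesis $T_0 \subset \mathrm{int}(\rho(f))$ allows one to shrink $f$ to a slightly dilated problem, gaining a fixed amount in the bound.

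The main obstacle is the explicit combinatorial bookkeeping producing the specific constant $222$, rather than an unspecified positive constant as in the qualitative proof of \cite{BHMMW}. This requires carefully quantifying, step by step along any path in $\Cfin{}(\T^2)$ from $\alpha$ to $f^n(\alpha)$, how many intersections with the reference family can be introduced or destroyed by a single adjacency move, and then comparing this to the intersection growth forced by the three lift-displaced fixed points. The interplay between this constant and the threshold $EW > 4$ from Proposition \ref{prop:CompareWidth}(2) is precisely what produces the overall bound $m_c \geq 1/\max(888 c, 1110)$ in Paragraph \ref{ssec:Relations}, with $888 = 4 \cdot 222$.
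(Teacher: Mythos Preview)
Your proposal diverges from the paper's proof in a way that creates a real gap. The paper does begin, as you do, with Franks' theorem to obtain three fixed points $A,B,C$ with rotation vectors $(0,0),(1,0),(0,1)$. But from there it does \emph{not} attempt any direct intersection-counting in $\Cfin{}(\T^2)$. Instead it invokes Llibre--McKay to see that the isotopy class of $f$ rel $\{A,B,C\}$ is pseudo-Anosov, then uses \cite[Lemma~4.2]{BHW} to bound $|f|$ below by the translation length of this pseudo-Anosov on the \emph{classical} curve graph of the thrice-punctured torus, and finally quotes the explicit lower bound $\frac{1}{222}$ for that quantity from Gadre--Tsai \cite[Theorem~5.1]{GadreTsai}. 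The number $222$ thus comes from a specific train-track/nesting analysis on a fixed finite-type surface, not from edge-by-edge bookkeeping in the fine graph.

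Your plan, by contrast, asserts that tracking intersection changes along edges of $\Cfin{}(\T^2)$ ``will yield an estimate of the form $d(\alpha,f^n(\alpha))\geq n/222 - O(1)$'', but gives no mechanism that would produce the particular constant $222$; a direct combinatorial argument of this sort, if it works at all, would produce some other constant with no reason to match Gadre--Tsai's. (Note also that the ``argument in \cite{BHMMW}'' you say you are refining is already the pseudo-Anosov route, not an intersection-counting argument.) Finally, your device for upgrading $\geq$ to $>$ by ``shrinking $f$ to a slightly dilated problem'' is not clearly meaningful: $f$ is a fixed homeomorphism, and there is no evident dilation operation on it that preserves the hypothesis while strictly improving the bound. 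In the paper's route, strictness is inherited directly from the strict lower bound in Gadre--Tsai.
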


\begin{proof}
  This proof consists essentially
  in following the proof of Theorem 1.3 of \cite{BHMMW}, while keeping track of
  estimates whenever possible. Let $f$ be a homeomorphism of the torus, isotopic to 
  the identity, whose rotation set contains $T_0$ in its interior.
  By Franks' theorem \cite{Franks},
  we may find three distinct points $A,B,C$ that are fixed by $f$ and whose
  rotation vectors respectively equal the three integer elements of $T_0$.
  By Llibre-McKay's argument (part 1 of the proof of the main theorem, p122 of \cite{LM}),
  the isotopy class of the map $f$ relative to $\{A,B,C\}$ is pseudo-Anosov.
  By \cite[Lemma~4.2]{BHW}, the asymptotic translation length $|f|$ is not
  smaller than the asymptotic translation length of the action of $f$  on the
  (usual) curve graph of the surface $S = \T^2 \setminus \{A,B,C\}$.
  Note that the topology of $S$ does not depend on the points $A,B,C$, that is,
  any two copies of the torus minus three points are homeomorphic.
  
  Finally by the work of Masur and Minsky \cite[Proposition 4.6]{MM99},
  the asymptotic translation length
  of pseudo-Anosov maps acting on the curve graph of a given non sporadic finite
  type surface $S$ is bounded from below by a positive constant $\|S\|$ that
  depends only on the topology of the surface.
  More precisely, on the 3-punctured torus, a theorem of Gadre and Tsai gives
  the bound 
  $\frac{1}{222}$ (\cite[Theorem~5.1]{GadreTsai}). This completes the proof.
\end{proof}

\begin{proof}[Proof of point (1) of Theorem~\ref{thm:RotVSLength}]
  We first consider the case when $c=1$. Let $f\in \Homeo_0(\T^2)$ be such that
  $0<EW(\rho(f))\leqslant 1$. Let $n$ be the smallest integer so that
  $nEW(\rho(f)) > 4$.
  We have $(n-1)EW(\rho(f)) \leqslant 4$, so
  $n \leqslant \frac{4}{EW(\rho(f))} + 1 \leqslant \frac{5}{EW(\rho(f))}$. 

  By Proposition~\ref{prop:CompareWidth}, the interior of $\rho(f^n)$ contains three
  non aligned integer points, and by Lemma~\ref{lem:T0} and $\SL_2(\Z)$-equivariance
  we have $|f^n|>\frac{1}{222}$. Hence
  
  \[ |f| > \frac{1}{222n}\geqslant \frac{EW(\rho(f))}{5 \times 222} =
  \frac{EW(\rho(f))}{1110}. \]
  
  This proves that the constant $m_1=\frac{1}{1110}$ suits our need.
  This is obviously not sharp; it would seem quite challenging, though, to find
  sharp constants. Obviously we can take $m_c = m_1$ for every $c \leqslant 1$.
  
  Finally, suppose $c > 1$ and let $f$ be such that $EW(\rho(f)) \leqslant c$.
  Assuming additionaly that $EW(\rho(f)) >1$  we get that $EW(\rho(f^4)) >4$ and,
  as before $|f^4| > |T_0|$, so that
  \[ |f|\geqslant \frac{|T_0|}{4}\geqslant \frac{|T_0|}{4c}EW(\rho(f)) \geqslant 
  \frac{1}{888c}EW(\rho(f)), \]
  so by putting this together with our first case we see that the
  constant $m_c = \frac{1}{\max(888c, 1110)}$ suits our need.
\end{proof}

\begin{proof}[Proof of Corollary~\ref{cor:NoBloodyRoots}]
  Denote $G=\Homeo_0(\T^2)$.
  By Proposition~\ref{thm:MZ1}, Lemma~\ref{lem:EW-Cont}, and the continuity of
  the asymptotic translation length~\cite[Theorem 3.4]{BHMMW},
  every $f\in G$ with rotation set having non
  empty interior is a point of continuity of the map
  $r\colon g\mapsto \frac{|g|}{EW(\rho(g))}$.
  By Theorem~\ref{thm:RotVSLength}~(2), the image of this ratio map $r$
  has $0$ as an accumulation point.
  Consider the set
  \[ U_0 = \left\lbrace [h]\in G\dq G \, , \, r(h)<\frac{m_1}{2} \right\rbrace, \]
  where $m_1$ is the lowest possible bound with the notation of
  Theorem~\ref{thm:RotVSLength}~(1).
  Then $U_0$ is nonempty and the continuous map
  $[h]\mapsto EW(\rho(h))$ is bounded below by $1$ on $U_0$. Let $ew_0$ be the
  infimum of $EW$ on this set. Let $\varepsilon>1$. Now consider the set
  \[ U_1 =
  \left\lbrace [h]\in U_0 \, , \, EW(\rho(h))\in(ew_0, ew_0\varepsilon) \right\rbrace.\]
  This is nonempty and open. Let $h$ such that $[h]\in U_1$, and let $f$ be a
  $\frac{p}{q}$-root of $h$. Since $f^p = h^q$, we get that
  $EW(\rho(h)) = \frac{p}{q} EW(\rho(f))$, and $r(f) = r(h)$.
  We first deduce that $[f] \in U_{0}$, thus $EW(\rho(f)) \geqslant ew_{0}$,
  and $ew_0 \varepsilon > EW(\rho(h)) \geqslant \frac{p}{q} ew_{0}$.
  We conclude that $\frac{p}{q} < \varepsilon$.
  This proves that $h$ has no $\frac{p}{q}$-root with $\frac{p}{q} \geqslant \varepsilon$.
\end{proof}
Of course, the same argument provides explicit maps without $n$th roots for
explicit (large) $n$. For example, if we consider the map $f_N=v^Nh^N$ from
Section~\ref{ssec:LenVSRot} with $N$ larger than $\frac{2}{m_1}$,
then $EW(\rho(f_N)) = N$ and $r(f_N) < m_{1}$; we deduce that for all
$\frac{p}{q}>N$, the map $f_N$ is without $\frac{p}{q}$-th roots, and shares
this property with all maps $h$ such that $[h]$ is in a sufficiently small
neighborhood of $[f_N]$ in $G\dq G$.

%%%%%%%%%%%%%%%%%%%%%%%%%%%%%%%%%%%%%%%%%%%%%%%%%%%%%%%%%%%%%%%%%%%%%
%%%%%%%%%%%%%%%%%%%%%%%%%%%%%%%%%%%%%%%%%%%%%%%%%%%%%%%%%%%%%%%%%%%%%
%%%%%%%%%%%%%%%%%%%%%%%%%%%%%%%%%%%%%%%%%%%%%%%%%%%%%%%%%%%%%%%%%%%%%
%%%%%%%%%%%%%%%%%%%%%%%%%%%%%%%%%%%%%%%%%%%%%%%%%%%%%%%%%%%%%%%%%%%%%

\bibliographystyle{plain}

\bibliography{BibWC}

\end{document}